\newtheorem{remark}{Remark}[section]
\numberwithin{equation}{section}
\title[Zakharov-Rubenchik]{On the Cauchy problem for the Zakharov-Rubenchik/ Benney-Roskes system}
\author[H. Luong]{Hung Luong}
\address{Fak. Mathematik, University of Vienna, Oskar MorgensternPlatz 1\\
A-1090 Wien, Austria}
\email{luongh88@univie.ac.at}
\author[N. Mauser]{Norbert Mauser}
\address{Wolfgang Pauli Institute c/o Fak. Math. Univ. Vienna, Oskar MorgensternPlatz 1\\
A-1090 Wien, Austria}
\email{ norbert.mauser@univie.ac.at}
\author[J.-C. Saut]{Jean-Claude Saut}
\address{Laboratoire de  Math\' ematiques, UMR 8628\\
Universit\' e Paris-Saclay, Paris-Sud and CNRS\\ F-91405 Orsay, France}
\email{jean-claude.saut@u-psud.fr}
\date{February 16th,  2018}
\begin{document}
	\newcommand{\R}{\mathbb{R}}
	\newcommand{\T}{\mathbb{T}}
	\newcommand{\Z}{\mathbb{Z}}
	\newcommand{\C}{\mathcal{C}}
	\newcommand{\U}{\mathbb{U}}
	\newcommand{\N}{\mathbb{N}}
	\newcommand{\W}{\mathcal{W}}
	\newcommand{\s}{\text{sinh}}
	\newcommand{\norm}[2]{ \left \lVert #1; \, #2  \right \rVert}
	\newcommand{\norma}[1]{ \left \lVert  #1 \right \rVert}
	\newcommand{\bra}[1]{\left \langle #1 \right \rangle}

\newtheorem{theorem}{Theorem}[section]
\newtheorem{lemma}{Lemma}[section]
\newtheorem{proposition}{Proposition}[section]
\newtheorem{definition}{Definition}[section]

\maketitle
\tableofcontents
\begin{center}
Dedicated to Vladimir Georgiev with admiration and friendship.
\end{center}

\begin{abstract}
We address various issues concerning the Cauchy problem for the Zakharov-Rubenchik system, (known as the Benney-Roskes system in water waves theory) which models the interaction of short and long waves in many physical situations. Motivated by the transverse stability/instability of the one-dimensional solitary wave (line solitary), we study the Cauchy problem in the background of a line solitary wave. 
 
\end{abstract}
\section{Introduction}
\label{section introduction}

This paper is concerned with various issues concerning the Cauchy problem for the two or three-dimensional  Zakharov-Rubenchik (or Benney-Roskes) system and its perturbation by a line soliton. The Zakharov-Rubenchik system  is by no doubts  a fundamental one, being a  "generic" asymptotic system  in the so-called modulation regime (slowly varying envelope of a fast oscillating train) and it was actually derived in various physical contexts. Moreover it contains in various limits the classical (scalar)  Zakharov system (coupling a nonlinear Schr\"{o}dinger equation and a wave equation, see \eqref{Zakh} below) and the Davey-Stewartson systems (coupling a nonlinear Schr\"{o}dinger equation and an elliptic equation). We refer to  \cite{Za-Ku} for more details on the formal derivation of those systems and on the physical background. 

The Davey-Stewartson system was first derived formally in the context of water waves in \cite{DS, AS, DR} (see also \cite{Co, CoLa} for a derivation of Davey-Stewartson systems in a different context). However, as noticed in \cite{La1} it is less general than the Benney-Roskes system \eqref{Benney-Roskes model} below in the sense that the initial conditions for the acoustic type components have to be prepared to obtain an approximation of the full water waves system. 

We refer to  \cite{Cor} for a rigorous justification of the Zakharov  limit of the Zakharov-Rubenchik system and to \cite{O2} for the Schr\"{o}dinger limit of the Zakharov-Rubenchik system in the one-dimensional case and for well-prepared initial data.

 The Zakharov-Rubenchik/Benney-Roskes system  is thus richer than
those simpler models and should capture more of the original dynamics. It was  introduced in \cite{ZR} (see also the survey article \cite{Za-Ku}) to describe the interaction of spectrally narrow high-frequency wave packet of small amplitude with a low-frequency acoustic type oscillations. The analysis is general
and carried out in the Hamiltonian formalism and yields the following {\it universal} system

\begin{equation}
 \label{ZR-system}
  \left \{
   \begin{aligned}
   & \psi_t +v_g \psi_x +i\frac{\omega"}{2} \psi_{xx}+ i\frac{v_g}{2k}\Delta_\perp \psi- i(q|\psi|^2 + \beta \rho+\alpha \phi_x) \psi = 0, \\
   & \rho_t + \rho_0\Delta \phi + \alpha (|\psi|^2)_x =0 \\
   & \phi_t + \frac{c^2}{\rho_0} \rho +\beta |\psi|^2 = 0,
   \end{aligned}
  \right.
\end{equation}
where $v_g, \, \omega", \, k, \, q, \, \beta, \, \alpha, \rho_0, \, c$ are parameters .The two last equations describe the acoustic type waves and $\Delta_{\perp}= \partial_x^2+\partial_y^2$ or $\partial_x^2,$  $\Delta=\Delta_{\perp}+\partial_x^2.$

In two space dimensions a more specific (formal) derivation in the context of surface water waves is displayed in \cite{BR}  and rigorously justified in \cite{La1}, see below for a more precise description. \\

\vspace{0.3cm}
 In the notations of \cite {PoSa} (see also  \cite{PSS} where it is used in the context of Alfv\' en waves in dispersive MHD), the Zakharov-Rubenchik system has the form
\begin{equation}
 \label{2-D Z-R system 1}
  \left \{
   \begin{aligned}
   & \psi_t - \sigma_3 \psi_x - i \delta \psi_{xx} - i \sigma_1 \Delta_{\perp}\psi + i \left \{  \sigma_2 |\psi|^2 +W (\rho+ D\phi_x) \right \} \psi = 0 \\
   & \rho_t + \Delta \phi + D (|\psi|^2)_x =0 \\
   & \phi_t + \dfrac{1}{M^2} \rho + |\psi|^2 = 0,
   \end{aligned}
  \right.
\end{equation}
where $\psi: \R \times \R^d \rightarrow \mathbb{C}$, $\rho,\phi : \R \times \R^d \rightarrow \R, d=2,3$ describe  the fast oscillating and respectively acoustic type waves.

Here  $\sigma_1, \sigma_2, \sigma_3= \pm 1,$ $W>0$ measures the strength of the coupling with acoustic type waves, $M>0$ is a Mach number, $D\in \R$  is associated
to the Doppler shift due to the medium velocity and $\delta \in \R$ is a nondimensional dispersion coefficient.


When $\alpha=0$ (resp. $D=0$) in \eqref{ZR-system} (resp. \eqref{2-D Z-R system 1}) the Zakharov-Rubenchik system reduces to the classical (scalar) Zakharov system (see {\it eg} Chapter V in \cite{SuSu}). More precisely, in the framework of \eqref{ZR-system}, one gets

\begin{equation}
 \label{Zakh}
  \left \{
   \begin{aligned}
   & \psi_t +v_g \psi_x +i\frac{\omega"}{2}  \psi_{xx}+ i\frac{v_g}{2k}\Delta_\perp \psi= i(q|\psi|^2 + \beta \rho) \psi, \\
   & \rho_{tt}-c^2\Delta \rho=\beta \rho_0\Delta |\psi|^2,
   \end{aligned}
  \right.
\end{equation}

which is a form of the $2$ or $3D$ Zakharov system. Note however that the second order operator in the first equation is not necessarily elliptic.

\vspace{0.3cm}
The local well-posedness in $H^s(\R^d) \times H^{s-1/2}(\R^d) \times H^{s+1/2}(\R^d)$ with $s>\frac{d}{2}, d=2,3$ for \eqref{2-D Z-R system 1}, \eqref{ZR-system} was obtained in \cite{PoSa} by using the local  smoothing property of the free Schr\"odinger operator after reducing the system to a quasilinear (non local) Schr\"{o}dinger equation. Since it uses dispersive properties of the free Schr\"{o}dinger group that are valid only in the whole space the proof does not extend to the Cauchy problem posed in $\T^d$ or $\R^{d-1}\times \T,$ the later situation being relevant for transverse stability issues. On the other hand, when applied to the Benney-Roskes system \eqref{notations B-R/Z-R equation} below, it provides an existence time of order $O(1)$\footnote{Roughly speaking, the idea in \cite{PoSa} is to reduce the system to a (nonlocal) quasilinear Schr\"{o}dinger equation. When $\epsilon$ is taken into account, the crucial dispersive smoothing estimate on the Schr\"{o}dinger group has a $1/\epsilon$ factor while  the nonlinear term has a $\epsilon$ factor.} while an existence time of order $O(1/\epsilon)$ is needed to fully justify the Benney-Roskes as a water wave model on the correct time scales (see \cite{La}).

 Local well-posedness of the Zakharov-Rubenchik/Benney-Roskes system was  also obtained  in \cite{Ob}, for $s > 2 $ with the additional condition $\delta \sigma_1 > 0$ (that is the second order operator in the first equation of \eqref{2-D Z-R system 1}, \eqref{ZR-system}  is elliptic) by using an energy method inspired by the work of Schochet-Weinstein in \cite{SW} on the nonlinear Schr\"{o}dinger limit of the Zakharov system. The method used in \cite{Ob} and \cite{SW} consists in rewriting the Zakharov system (or the Zakharov-Rubenchik system) as a dispersive (skew-adjoint) perturbation of a symmetric nonlinear hyperbolic system and  it uses only the algebraic structure of the system. A shortcoming of the method is that one has to prepare the initial data.
 
 We will see that, when the small parameter $\epsilon$ is included, this method provides also the existence on the  time scale $O(1)$ in the context of water waves (see the Benney-Roskes system \eqref{Benney-Roskes model} below) and moreover that  it can be applied to the system obtained from  \eqref{2-D Z-R system 1}  which is satisfied by a (localized) perturbation of a line soliton. Also, since it does not use any dispersive property of the Schr\"{o}dinger group, it applies  to the Cauchy problem in $\T^d$ or $\R^{d-1}\times \T,$ a situation that has not been addressed before (see on the other hand \cite{Bo, BC} for the periodic Zakharov system) .

 Thus, none of the two aforementioned methods seems to give the expected existence time scale  for the Benney-Roskes system. Nevertheless they provide different results for Zakharov-Rubenchik type systems. The "dispersive method" used in \cite{PoSa} works only in $\R^d$ but does not need the Schr\"{o}dinger part of the system to be "elliptic" (that is it does not need the condition $\delta \sigma_1 > 0$). Also it lowers the regularity on the initial data (an effect of the dispersive smoothing effect) and could be applied as well to (possibly non physical) nonlinear perturbations of the system.
 
 On the other hand, the Schochet-Weinstein type, "hyperbolic like" methods allow to deal with the periodic or semi-periodic cases, but are relatively rigid (they rely on the algebraic structure of the system) and require initial data in the "hyperbolic space" $H^s(\R^d), s>\frac{d}{2}+1.$ .

The situation is better understood in spatial dimension one. Oliveira \cite{O} proved the local (thus global using the conservation laws below) well-posedness in $H^2(\R)\times H^1(\R)\times H^1(\R).$ This result was improved in \cite {LM} where in particular global well-posedness was established in the energy space $H^1(\R)\times L^2(\R) \times L^2(\R).$ 

\vspace{0.3cm}
It is worth noticing that \eqref{2-D Z-R system 1} possesses two conserved quantities,
the $L^2$ norm 

$$\int |\psi(x,y_\perp,t)|^2=\int |\psi(x,y_\perp,0)|^2$$

where $y_\perp=y$ or $(y,z),$
and, after the change of variable $(x,t)\to (x+\sigma_3t,t),$ the Hamiltonian

\begin{equation}\label{Hamil}
\begin{aligned}
E(t) & =  \int_{\R^d}\left(\frac{\delta}{2}|\psi_x|^2+\frac{\sigma_1}{2}|\nabla_\perp \psi|^2+\frac{\sigma_2}{4}|\psi|^4+\frac{W}{4M^2}\rho^2+\frac{W}{4}|\nabla\phi|^2+\frac{W}{2}(\rho+D\phi_x)|\psi|^2\right) \\
&  =E(0)
\end{aligned}
\end{equation}

The conservation laws are used in  \cite{PoSa} to obtain global weak solutions under suitable assumptions on the coefficients. We will use them in Section 5 to prove the global existence of weak solutions of the systems obtained by perturbing a line (dark) soliton. Note also that the conservation laws can be used to get the global well-posedness of the Zakharov-Rubenchik, Benney-Roskes system in space dimension one (see \cite{O}).

\vspace{0.5cm}
As aforementioned, in the context of water waves, the Zakharov-Rubenchik system is known as the  Benney-Roskes sytem and it was formally derived in \cite{BR}. We follow here the notations in \cite{La1}, where a rigorous derivation is performed. \\
\begin{equation}
 \label{notations B-R/Z-R equation}
 \begin{split}
  & \textbf{k} = |\textbf{k}| \textbf{e}_x, \quad \omega(\textbf{k}) = \underline{\omega}(|\textbf{k}|), \\
  & \omega = \underline{\omega}(\textbf{k}), \quad \omega' = \underline{\omega'}(|k|), \quad \omega'' = \underline{\omega''}(|\textbf{k}|), \\
  & \text{where} \quad \underline{\omega}(\xi)=  \left( \left( g + \dfrac{\sigma}{\rho} |\xi|^2  \right) |\xi| \tanh (\mu |\xi|)  \right)^{1/2}  
 \end{split}
\end{equation}
is the dispersion relation  of water waves and
where $|\textbf{k}|$ is a fixed wave number, $g$ is the gravity, $\sigma\geq 0$ is a surface tension coefficient, $\rho$ is the density of the water and $\mu$ is the shallowness parameter (square of   the typical fluid depth over a typical horizontal scale)  which is large or infinite in the deep water models and $\alpha = -\dfrac{9}{8 \sigma^2} (1- \sigma^2)^2$.\\

The small parameter $\epsilon$ is the {\it wave steepness} that is the ratio of a typical amplitude of the wave over a typical horizontal scale. Recall (\cite{La1}) that the typical time scale for the solutions of \eqref{Benney-Roskes model} below is $1/\epsilon $  and so it is crucial to establish the well-posedness on those time scales.

The Benney-Roskes equations can then be written in 2 dimensions  as follows
\begin{equation}
\label{Benney-Roskes model}
 \left \{
  \begin{aligned}
   & \partial_t \psi_{01} + \omega' \partial_x \psi_{01} - i \epsilon \dfrac{1}{2} (\omega'' \partial_x^2 + \dfrac{\omega'}{|\textbf{k}|} \partial_y^2) \psi_{01} \\
   & \qquad +  \epsilon i \left(  |\textbf{k}| \partial_x \psi_{00} + \dfrac{|\textbf{k}|^2}{2 \omega} (1 - \sigma^2) \zeta_{10} + 2 \dfrac{|\textbf{k}|^4}{\omega} (1- \alpha) |\psi_{01}|^2    \right) \psi_{01} = 0 \\
   & \partial_t \zeta_{10} + \sqrt{\mu} \Delta \psi_{00} = - 2 \omega |\textbf{k}| \partial_x (|\psi_{01}|^2) \\
   & \partial_t \psi_{00} + \zeta_{10} = - |\textbf{k}|^2 (1- \sigma^2) |\psi_{01}|^2.
  \end{aligned}  
 \right.
\end{equation}

It is known   (see {\it eg} \cite{La1}  Chapter 8) that $\omega ' >0$, while for purely gravity waves ($\sigma =0)$  $\omega$ is a concave function, thus $\omega''<0$ and the Schr\"{o}dinger equation in the Benney-Roskes system is "non elliptic".

On the other hand, in presence of surface tension, the condition $\omega''>0$ is possible as shown in the following computation.

  For simplicity of notations, we will consider $\omega$ of the following form instead of \eqref{notations B-R/Z-R equation}
 \[
  \omega(r) = \left( (1+ \gamma r^2) r \tanh(\mu r)  \right)^{1/2}
 \]
with $\gamma > 0$  depends on $(\, g, \, \rho),$ is proportional to $\sigma$ and  $r = |\textbf{k}|$. We have
\[ 
 \begin{aligned}
 \omega'(r) = & \left(     (1+ 3 \gamma r^2) \tanh(\mu r) + \mu (r+ \gamma r^3) \,  \text{sech}^2 (\mu r)       \right) \\
 & \times \dfrac{1}{2} \left(  (r+ \gamma r^3) \tanh(\mu r)  \right)^{-1/2}
 \end{aligned}
\]
 and 
 \[
  \begin{aligned}
 &  \omega''(r)  \\
  & \quad = - \dfrac{1}{4} \left( (1+ 3\gamma r^2) \tanh (\mu r) + \mu  (r+ \gamma r^3) \text{sech}^2(\mu r) \right)^2 \, \left( (r+\gamma r^3) \tanh(\mu r) \right)^{-3/2} \\
  & \quad \quad + \dfrac{1}{2} \left( (r+\gamma r^3) \tanh (\mu r) \right)^{-1/2} \, \bigg ( 6\gamma r \tanh (\mu r) + 2 \mu (1+3\gamma r^2) \text{sech}^2(\mu r) \\
  & \quad \qquad -2 \mu^2(r+\gamma r^3) \tanh (\mu r) \text{sech}^2(\mu r) \bigg ).
  \end{aligned}
 \]
 We see that $\omega'(r) > 0$ with $\gamma , r > 0$, we thus will look for $r$ such that with fixed $\gamma $, $\omega''(r)>0$.\\
 We assume that $\mu r\gg 1$ implying that  $\text{sech}(\mu r) \thickapprox 0$ and $\tanh(\mu r) \thickapprox 1$. Therefore we only need to choose $r$ large enough so that 
 
  $$12 \gamma r > \dfrac{(1+3\gamma r^2)^2}{r+\gamma r^3}$$
 
 or 
 
 $$ 3\gamma^2r^4 + 6\gamma r^2 > 1.$$

 \vspace{0.3cm}
 In order to apply Schochet-Weinstein method we will need the condition $\delta \sigma_1 >0$ and  we will only consider the Zakharov-Rubenchik (or Benney-Roskes) system of the form of \eqref{2-D Z-R system 1} satisfying this condition.
 
 \vspace{0.5cm}
 The one-dimensional Zakharov-system possesses solitary wave solutions and in \cite{O}, Serra de Oliveira proved their orbital stability.  One motivation of the present paper was the study of their {\it transverse} stability. The transverse instability of the line solitary wave for some two dimensional models such as the nonlinear Schr\"{o}dinger  equation (NLS), the Kadomtsev-Petviashvili  equation (KP) and some general  "abstract" Hamiltonian systems have been carried out extensively in \cite{ RT, RT1, RT2, Mi, MT}.  
 
 It is thus of interest to study  the transverse stability of the line soliton for the two dimensional model \eqref{2-D Z-R system 1} and the first step is to study the Cauchy problem of  a localized perturbation of \eqref{2-D Z-R system 1} by a line soliton. Another possibility is to consider $y$ or $(y,z)$ - periodic perturbations of the line solitary wave, a first step being to establish the well-posedness of the Cauchy problem for the Zakharov-Rubenchik, Benney-Roskes system in $\R^d\times \T, \; d=1,2,$ which could not result from the methods used in \cite{PoSa} but we achieve here and also in the pure periodic case $\T^{d+1}.$\\

\vspace{0.3cm}
 In order to unify the notation we will rewrite the Benney-Roskes system \eqref{Benney-Roskes model} in the form of \eqref{2-D Z-R system 1}. We replace $(\psi_{01}, \dfrac{\zeta_{10}}{|\textbf{k}|^2 (1-\sigma^2)\sqrt{\mu}}, \dfrac{\psi_{00}}{|\textbf{k}|^2(1-\sigma^2)})$ by $(\psi,\rho,\phi)$ and after calculating the corresponding coefficients, we have:
 \begin{equation}
  \label{notations}
   \left \{
    \begin{aligned}
     & \sigma_3 = - \omega', \, \delta = \dfrac{\epsilon \omega''}{2}, \, \sigma_1 = \dfrac{\epsilon \omega'}{2 |\textbf{k}|}, \, \sigma_2 = \dfrac{2 \epsilon|\textbf{k}|^4 (1-\alpha)}{\omega} \\
     & W = \dfrac{ \epsilon|\textbf{k}|^4 (1-\sigma^2)^2 \sqrt{\mu}}{2 \omega}, \, D= \dfrac{2 \omega}{|\textbf{k}|(1- \sigma^2) \sqrt{\mu}}, \, M = \mu^{-1/4}.
    \end{aligned}
   \right.
 \end{equation}
 
 \vspace{0.3cm}
 The paper is organized as follows. In the next section we reformulate the existence of one-dimensional solitary waves (bright and dark) in our framework. In section 3 we use the Schochet-Weinstein method to prove a local existence for the Benney-Roskes/ Zakharov-Rubenchik system,  keeping the small parameter $\epsilon$  which is relevant for deep water waves. In section 4 we consider the case of a localized perturbation of a line solitary wave. Finally we prove in Section 5 the global existence of   weak solutions perturbing a dark solitary wave.

We conclude the paper by a list of open questions.
 

 \vspace{0.3cm}
\noindent{\bf Notations.} 
\begin{itemize}
	\item $\partial_x$ or $(\cdot)_x$ will be used to denote the derivative with respect to variable $x$.
	\item $H^s(D), s\in \R$ denotes the classical Sobolev space in the domain $D.$
	\item $\norma{\cdot}_X$: The norm in a functional space $X$.
	\item $\mathcal{F}$ and $\mathcal{F}^{-1}$ denote the Fourier and inverse Fourier transform respectively.
	\item $\bra{\xi} = \sqrt{1+|\xi|^2}$ for $\xi \in \R^n$ and $\sigma(D)$ denotes the Fourier multiplier with the symbol $\sigma(\xi)$.
	\item $\Re$ and $\Im$ denote the real part and imaginary part of a complex number respectively.
\end{itemize}

 \section{Existence of   one dimensional solitary waves} 
 \label{section existence of 1-d soliton}
 In this section, we reframe the proof of the existence of 1-d solitary waves in \cite{O} in our setting. The 1-d Zakharov-Rubenchik system has the form
 \begin{equation}
 \label{1-D Z-R system 2}
  \left \{
   \begin{aligned}
    & \psi_t - \sigma_3 \psi_x - i \delta \psi_{xx} + i \left \{ \sigma_2 |\psi|^2 + W (\rho + D \phi_x)   \right \} \psi =0 \\
    & \rho_t + \phi_{xx} + D (|\psi|^2)_x = 0\\
    & \phi_t + \dfrac{1}{M^2} \rho + |\psi|^2 =0. 
   \end{aligned}
  \right.
\end{equation}
Setting $\tilde{\phi}(x,t) = \phi_x,$  \eqref{1-D Z-R system 2} becomes
\begin{equation}
 \label{1-D Z-R system 3}
  \left \{
   \begin{aligned}
    & \psi_t - \sigma_3 \psi_x - i \delta \psi_{xx} + i \left \{ \sigma_2 |\psi|^2 + W (\rho + D \tilde{\phi})   \right \} \psi =0 \\
    & \rho_t + \tilde{\phi}_x + D (|\psi|^2)_x =0 \\
    & \tilde{\phi}_t + \dfrac{1}{M^2} \rho_x + (|\psi|^2)_x =0.
   \end{aligned}
  \right.
\end{equation}
Let $c \geq 0$, we look for solutions of the system \eqref{1-D Z-R system 3} of the form 
\[
 ( e^{i \lambda t} K(x-ct), \, a |K(x-ct)|^2, \, b|K(x-ct)|^2).
\]

From the last two equations of \eqref{1-D Z-R system 3} we deduce that 

\begin{equation}\label{param}
a= \dfrac{-(1+cD)}{1/{M^2}-c^2} \quad\text{and}\quad  b= \dfrac{-(c + D/M^2)}{1/M^2-c^2}.
\end{equation}
\\
Then the first equation of \eqref{1-D Z-R system 3} is equivalent to
\begin{equation*}
 \delta \ddot{K} - i (c+ \sigma_3) \dot{K} - \lambda K = \left( \sigma_2 + W (a + b D)  \right) |K|^2 K.
\end{equation*}
Set \[ R(x) = e^{-i(c + \sigma_3)x/2\delta} K(x) \]
then 
\begin{equation}
 \label{elliptic equation soliton}
  \delta \ddot{R} + \left( \dfrac{(c + \sigma_3)^2}{4 \delta} - \lambda   \right) R = \left(  \sigma_2 + W (a + b D)  \right) |R|^2 R.
\end{equation}
The equation \eqref{elliptic equation soliton} has a unique positive solution if :
\begin{equation*}
  \left \{
   \begin{aligned}
    & \dfrac{1}{\delta} \left( \dfrac{(c + \sigma_3)^2}{4 \delta} - \lambda   \right) < 0\\
    & \dfrac{1}{\delta} \left(  \sigma_2 + W (a + b D)  \right) <0
   \end{aligned}
  \right.
\end{equation*}
or equivalently
\begin{equation}
 \label{condition of existence solution}
  \left \{
   \begin{aligned}
    & \dfrac{1}{\delta} \left( \dfrac{(c + \sigma_3)^2}{4 \delta} - \lambda   \right) < 0\\
   &  \dfrac{1}{\delta} \left(  \sigma_2 - \dfrac{W (1 + D^2/M^2 + 2 c D)}{1/M^2 - c^2}  \right) <0
   \end{aligned}
  \right.
\end{equation}   
We see that if $c \rightarrow (1/ M)^-$ and $\lambda$ is large enough then \eqref{condition of existence solution} holds assuming that  $W > 0$ and $\delta > 0$ which  holds true  in both models \eqref{2-D Z-R system 1} and \eqref{Benney-Roskes model} .\\
In this case,
\begin{equation}
\label{bright soliton}
R(x) = \sqrt{ \dfrac{2}{\sigma_2 + W (a+ b D)} \left( \dfrac{(c + \sigma_3)^2}{4 \delta} - \lambda  \right)} \text{sech} (\sqrt{- \dfrac{1}{\delta} \left( \dfrac{(c + \sigma_3)^2}{4 \delta} - \lambda   \right)} \, x)
\end{equation}
Otherwise, if 
\begin{equation*}
 \left \{
  \begin{aligned}
   & \dfrac{1}{\delta} \left( \dfrac{(c + \sigma_3)^2}{4 \delta} - \lambda   \right) > 0\\
   & \dfrac{1}{\delta} \left(  \sigma_2 + W (a + b D)  \right) > 0
  \end{aligned}
 \right.
\end{equation*}
or equivalently
\begin{equation}
 \label{condition of existence of black soliton}
  \left \{
   \begin{aligned}
    & \dfrac{1}{\delta} \left( \dfrac{(c + \sigma_3)^2}{4 \delta} - \lambda   \right) > 0\\
   &  \dfrac{1}{\delta} \left(  \sigma_2 - \dfrac{W (1 + D^2/M^2 + 2 c D)}{1/M^2 - c^2}  \right) > 0
   \end{aligned}
  \right.
\end{equation}   
In the context of  water waves (Benney-Roskes system) there is a regime where the condition \eqref{condition of existence of black soliton} holds. In particular, if we choose $c > \dfrac{1}{ M}$ and $\lambda < \dfrac{(c + \sigma_3)^2}{4 \delta} $ then \eqref{condition of existence of black soliton} holds, since from \eqref{notations} we know that $\delta, \, \sigma_2, \, W $ and $D$ are positive. If $c=0$ then \eqref{condition of existence of black soliton} is equivalent to
\begin{equation*}
 \left \{
  \begin{aligned}
   & \dfrac{\sigma_3^2}{4 \delta} > \lambda \\
   & \dfrac{2 |\textbf{k}|^4 (1- \alpha)}{\omega} - \dfrac{|\textbf{k}|^4 (1- \sigma^2)^2}{2 \omega} - \dfrac{2  \omega|\textbf{k}^2|}{\sqrt{\mu}} >0
  \end{aligned}
 \right.
\end{equation*}
Since $\alpha <0$ one has $\dfrac{2 |\textbf{k}|^4 (1- \alpha)}{\omega} > \dfrac{|\textbf{k}|^4 (1- \sigma^2)^2}{2 \omega} $. Therefore if $\mu$ is large enough (which occurs in the context of deep water waves) then the above conditions hold.

In this case, 
\begin{equation}
\label{dark soliton}
 R(x) = \sqrt{\dfrac{1}{\sigma_2 + W (a+b D)} \left( \dfrac{(c + \sigma_3)^2}{4 \delta} - \lambda \right)} \tanh ( \sqrt{\dfrac{1}{\delta} \left( \dfrac{(c + \sigma_3)^2}{4 \delta} - \lambda \right)} \, x )
\end{equation}

Then the system \eqref{1-D Z-R system 3} has two kind of solitary waves corresponding to the two conditions \eqref{condition of existence solution} and \eqref{condition of existence of black soliton}:
\[
 (  e^{i \lambda t} e^{i (c+ \sigma_3) x / 2 \delta} R(x-ct), \, a \, R^2(x-ct), \, b \, R^2(x-ct)   )
\]
Recalling that $\tilde \phi=\phi_x,$  the solutions of system \eqref{1-D Z-R system 2} should have thus the form 
\begin{equation}
 \label{line soliton}
  Q = (  e^{i \lambda t} e^{i (c+ \sigma_3) x / 2 \delta} R(x-ct), \, a \, R^2(x-ct), \, b \,  P(x-ct)   ).
\end{equation}
Where
\[
 P(x)= \dfrac{\alpha^2}{\beta} \tanh(\beta x)
\]
with
\[
\alpha = \sqrt{ \dfrac{2}{\sigma_2 + W (a+ b D)} \left( \dfrac{(c + \sigma_3)^2}{4 \delta} - \lambda  \right)}, \quad  \beta= \sqrt{- \dfrac{1}{\delta} \left( \dfrac{(c + \sigma_3)^2}{4 \delta} - \lambda   \right)}
\]
in the case $R(x)$ is given by \eqref{bright soliton}.

And 
\[
P(x)= \dfrac{\alpha^2}{\beta}(\beta x-\tanh(\beta x))
\]
with 
\[
 \alpha = \sqrt{ \dfrac{2}{\sigma_2 + W (a+ b D)} \left( \dfrac{(c + \sigma_3)^2}{4 \delta} - \lambda  \right)}, \quad  \beta= \sqrt{ \dfrac{1}{\delta} \left( \dfrac{(c + \sigma_3)^2}{4 \delta} - \lambda   \right)}
\]
in the case $R(x)$ is given by \eqref{dark soliton}.

\textit{Remark}: Similarly  to the case of the cubic nonlinear Schr\"odinger equation, we will call the 1-d solitary wave corresponding to the condition \eqref{condition of existence solution} and \eqref{condition of existence of black soliton} the ``bright'', ``dark'' soliton respectively.
\section{The Z-R/B-R system}
\label{section Z-R/ B-R system}
\label{Section nonperturbed system}
 As aforementioned the asymptotic model \eqref{Benney-Roskes model} is a good approximation of the full water wave system on a time scale $O(1/\epsilon)$ (see \cite{La1} page 233). It is thus crucial to prove the well-posedness of the Cauchy problem on time scales of order $1/\epsilon.$ However, the existence time obtained by using the method in \cite{PoSa} does not reach the $O(1/\epsilon)$ time scale (as we already mentionned it is of order $O(1)$)

  In this section, we give the proof of the  local well-posedness for \eqref{2-D Z-R system 1} by using Schochet-Weinstein method in \cite{Ob}  but  keeping the parameter $\epsilon$ in \eqref{Benney-Roskes model} to estimate the existence time obtained by this method.  It turns out, however, that one does not improve upon the previously known $O(1)$ result (see however the comments in the Introduction).
  
  We consider the following system
\begin{equation}
 \label{2-D Z-R system 1 with epsilon}
  \left \{
   \begin{aligned}
   & \psi_t - \sigma_3 \psi_x - i  \epsilon \delta \psi_{xx} - i \epsilon \sigma_1 \psi_{yy} + i \epsilon \left \{  \sigma_2 |\psi|^2 + W (\rho+ D\phi_x) \right \} \psi = 0 \\
   & \rho_t + \Delta \phi + D (|\psi|^2)_x =0 \\
   & \phi_t + \dfrac{1}{M^2} \rho + |\psi|^2 = 0,
   \end{aligned}
  \right.
\end{equation}

with initial conditions  $(\psi_0,\rho_0,\phi_0)$ for which we obtain a local existence result :
\begin{theorem}
\label{local existence for unperturbed system}
Let $\delta \sigma_1 >0$, $s>2$. Let the initial data $(\psi_0,\rho_0,\phi_0)\in H^{s+1}(\R^2)\times H^s(\R^2)\times H^{s+1}(\R^2)$ and satisfies that
\begin{equation}
\label{V_0}
 WM (-\Delta \phi_0 - \dfrac{D}{M^2} \partial_x \rho_0) = \nabla \cdot V_0,
\end{equation} 
with $V_0 \in H^s(\R^2)^2$.

Then there exist $T>0$ independent of $\epsilon$ and a unique solution $(\psi,\rho,\phi)\in L^\infty(0,T;H^{s+1}(\R^2))\times L^\infty(0,T;H^s(\R^2))\times L^\infty (0,T;H^{s+1}(\R^2)) $ of \eqref{2-D Z-R system 1 with epsilon}.


\end{theorem}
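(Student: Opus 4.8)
The plan is to follow the Schochet--Weinstein energy method used in \cite{Ob} (see also \cite{SW}), while keeping track of the parameter $\epsilon$. The first task is to recast \eqref{2-D Z-R system 1 with epsilon} as a \emph{symmetric hyperbolic} system for the acoustic unknowns, coupled to a Schr\"odinger equation whose dispersive principal part $i\epsilon(\delta\partial_x^2+\sigma_1\partial_y^2)$ is \emph{skew-adjoint}. The difficulty to be circumvented is that $\phi_t=-\tfrac1{M^2}\rho-|\psi|^2$ would force one to control $\phi$ at the $H^{s+1}$ level through a right-hand side containing $\rho\in H^s$, costing a derivative. I would therefore introduce an auxiliary velocity field $V$ (morally a first-order antiderivative of $-\Delta\phi-\tfrac{D}{M^2}\partial_x\rho$) so that $(\rho,V)$ solves a first-order symmetric hyperbolic system of acoustic type, all of whose unknowns live at the single level $H^s$, with source terms $\nabla|\psi|^2,\ \partial_x|\psi|^2$ that are merely quadratic in $\psi$. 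The compatibility hypothesis \eqref{V_0} is exactly what furnishes the initial datum $V_0\in H^s(\R^2)^2$ for this field (the ``preparation of the data'' mentioned in the Introduction); the potential $\phi$ is then recovered a posteriori at the $H^{s+1}$ level by integrating its equation in time, $\nabla\phi$ being controlled through $V$.

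Next I would run the energy estimate on $\mathcal{E}_s(t)=\|\psi\|_{H^{s+1}}^2+\|\rho\|_{H^{s}}^2+\|V\|_{H^{s}}^2$, applying $\langle D\rangle^{s+1}$ to the $\psi$-equation and $\langle D\rangle^{s}$ to the acoustic equations and taking the relevant real pairings. The transport term $\sigma_3\psi_x$ and the dispersive term $i\epsilon(\delta\partial_x^2+\sigma_1\partial_y^2)\psi$ drop out of the $\psi$-balance since both operators are skew-adjoint and commute with $\langle D\rangle^{s+1}$; this is what makes the estimate insensitive to $\epsilon$ in the dispersion. The acoustic principal part cancels by symmetry after integration by parts. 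The delicate term is the coupling $i\epsilon W n\psi$ with $n:=\rho+D\phi_x$ real and one derivative below $\psi$, where a crude product bound would lose a derivative. The key point is that, $n$ being real, its contribution to the balance is, up to the factor $\epsilon W$, an antisymmetric commutator pairing,
\[
\mathrm{Im}\,\langle\langle D\rangle^{s+1}(n\psi),\langle D\rangle^{s+1}\psi\rangle=\tfrac{i}{2}\,\langle\psi,[\langle D\rangle^{2s+2},n]\,\psi\rangle,
\]
and $[\langle D\rangle^{2s+2},n]$ is a commutator of order $2s+1$ whose leading symbol involves only $\nabla n$; a standard commutator estimate (of Kato--Ponce type) therefore bounds this pairing by $\|\nabla n\|_{L^\infty}\|\psi\|_{H^{s+1}}^2$, and since $\nabla n\in H^{s-1}\hookrightarrow L^\infty$ for $s>2$ no derivative is lost. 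The reciprocal acoustic sources belong to $H^s$ because $\psi\in H^{s+1}$, so they too are controlled without loss; all remaining terms are bounded by $C(\mathcal{E}_s)\mathcal{E}_s$ via Kato--Ponce product and commutator estimates. Crucially, each $\epsilon$-dependent contribution is either skew-adjoint (hence absent) or carries an explicit factor $\epsilon$, while the acoustic forcing $D\partial_x|\psi|^2$ is $O(1)$; thus $\tfrac{d}{dt}\mathcal{E}_s\le C(\mathcal{E}_s)\mathcal{E}_s$ with $C$ independent of $\epsilon\in(0,1]$, giving a lifespan $T>0$ depending only on the data, uniformly in $\epsilon$ (and explaining why the method does not reach the $O(1/\epsilon)$ scale). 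Following \cite{Ob}, the hypothesis $\delta\sigma_1>0$ is used to guarantee the coercivity of the energy, i.e. the sign-definiteness of $\delta\partial_x^2+\sigma_1\partial_y^2$.

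Finally, to convert the a priori bound into existence I would solve a regularized problem (frequency truncation or Friedrichs mollification of the transport and nonlinear terms), for which solutions exist on a time interval by a contraction argument; the same computation gives bounds uniform in the regularization parameter and in $\epsilon$ on a common $[0,T]$, and one passes to the limit using the uniform bound in $L^\infty(0,T;H^{s+1}\times H^s\times H^s)$ together with an Aubin--Lions compactness argument, recovering the solution in the stated class after undoing the substitution $V\mapsto\phi$. Uniqueness follows from an energy estimate for the difference of two solutions carried out one derivative lower, again using the skew-adjointness of the dispersion, the symmetry of the acoustic block and the real-potential structure of the coupling. The main obstacle throughout is precisely the apparent loss of one derivative in the coupling $W(\rho+D\phi_x)\psi$: it is what forces both the symmetric-hyperbolic-plus-skew-adjoint reformulation (hence the field $V$ and the condition \eqref{V_0}) and the use of the antisymmetric commutator-pairing estimate rather than a naive product bound; a secondary, but for the present goal essential, point is the careful bookkeeping of the powers of $\epsilon$ so that the existence time does not degenerate as $\epsilon\to0$.
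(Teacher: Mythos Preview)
Your proposal has a genuine gap at the very place you flag as ``the delicate term.'' You claim that for real $n=\rho+D\phi_x\in H^s$ the pairing
\[
\mathrm{Im}\,\bigl\langle \langle D\rangle^{s+1}(n\psi),\langle D\rangle^{s+1}\psi\bigr\rangle
=\mathrm{Im}\,\bigl\langle [\langle D\rangle^{s+1},n]\psi,\langle D\rangle^{s+1}\psi\bigr\rangle
\]
is bounded by $\|\nabla n\|_{L^\infty}\|\psi\|_{H^{s+1}}^2$. But the Kato--Ponce commutator estimate reads
\[
\bigl\|[\langle D\rangle^{s+1},n]\psi\bigr\|_{L^2}\lesssim \|\nabla n\|_{L^\infty}\|\psi\|_{H^{s}}+\|n\|_{H^{s+1}}\|\psi\|_{L^\infty},
\]
and the second term demands $n\in H^{s+1}$, which you do not have (only $H^s$). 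Treating the acoustic source $\partial_x|\psi|^2$ and the Schr\"odinger coupling $n\psi$ \emph{separately}, as you do, therefore does not close; this is exactly the derivative-loss obstruction that the Schochet--Weinstein method is designed to overcome by a \emph{cross-cancellation}, not by a commutator bound. Relatedly, your explanation of the hypothesis $\delta\sigma_1>0$ is incorrect: the operator $i\epsilon(\delta\partial_x^2+\sigma_1\partial_y^2)$ is skew-adjoint for any signs of $\delta,\sigma_1$, so no ``coercivity'' issue arises there.

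The paper's proof is structurally different. It promotes $\nabla\psi$ to independent unknowns $H+iL$, sets $\mathcal U=W\rho+WD\phi_x$ and then shifts $\mathcal D=\mathcal U+\tfrac{\sigma_1}{2}|\psi|^2$, and uses the identity (obtained from the $\psi$-equation)
\[
(|\psi|^2)_t-\sigma_3(|\psi|^2)_x=2\epsilon\delta(G\partial_xH_1-F\partial_xL_1)+2\epsilon\sigma_1(G\partial_yH_2-F\partial_yL_2).
\]
After this substitution, the first-order coupling terms $\mp\epsilon(G,F)\nabla\mathcal D$ in the $(H,L)$-equations and the terms $-\epsilon\sigma_1\delta(G\partial_xH_1-F\partial_xL_1)-\epsilon\sigma_1^2(G\partial_yH_2-F\partial_yL_2)$ in the $\mathcal D$-equation assemble into \emph{symmetric} matrices $A_1(U),A_2(U)$, once one rescales $H^*=(\sqrt{\delta\sigma_1}\,H_1,\sigma_1 H_2)$, $L^*=(\sqrt{\delta\sigma_1}\,L_1,\sigma_1 L_2)$. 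This rescaling is where $\delta\sigma_1>0$ is actually used. The resulting system is then a genuine symmetric hyperbolic system with skew-adjoint dispersive perturbation, for which the standard iteration applies with all unknowns at the same level $H^s$; the $O(1)$ lifespan comes from a zero-order matrix $C_1(U)$ that carries no $\epsilon$. Your introduction of $V$ and the role of \eqref{V_0} are correct and match the paper, but the heart of the argument---the symmetrization via the shift to $\mathcal D$ and the identity above---is missing from your sketch and cannot be replaced by the commutator bound you invoke.
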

\begin{remark}\label{3d}
With some minor changes, one obtains the same result in the three-dimensional case, that is $\psi_{yy}$ replaced by $\nabla_\perp \psi.$
\end{remark}
\begin{remark}\label{periodic}

The above theorem and its proof are valid {\it mutatis mutandi} in a periodic $(\T^d), d=2,3,$ or semi-periodic $(\R^{d-1}\times \T)$ setting.

\end{remark}
\begin{proof}

We follow closely the proof in \cite{Ob}, Section 3.3, but we keep track of the parameter $\epsilon$.

We first rewrite \eqref{2-D Z-R system 1 with epsilon} as a dispersive perturbation of a symmetric hyperbolic system.  We take the time derivative of  the second and the third equation of the system \eqref{2-D Z-R system 1}. This allows to decouple the linear parts of those equations
\begin{equation}
 \label{2-D Z-R decoupled equations}
  \left \{
   \begin{aligned}
    & \psi_t - \sigma_3 \psi_x - i \epsilon \delta \psi_{xx} - i \epsilon \sigma_1 \psi_{yy} + i \epsilon \left( \sigma_2 |\psi|^2 + W (\rho + D \phi_x)   \right) \psi =0 \\
    & \rho_{tt} - \Delta \left( \dfrac{1}{M^2} \rho + |\psi|^2  \right) + D (|\psi|^2)_{xt} =0 \\
    & \phi_{tt} - \dfrac{1}{M^2} \left(  \Delta \phi + D (|\psi|^2)_x  \right) + (|\psi|^2)_t = 0.
   \end{aligned}
  \right.
\end{equation}
We set 
\begin{equation} 
\label{Define mathcal U}
  \mathcal{U} =  W \rho +  W D \phi_x.
\end{equation}  

We then get a coupled system for $\psi$ and $\mathcal{U}$
\begin{equation}
 \label{Coupled system}
 \left \{
  \begin{aligned}
   & \psi_t - \sigma_3 \psi_x - i \epsilon \delta \psi_{xx} - i \epsilon \sigma_1\psi_{yy}  + i \epsilon \left(  \sigma_2 |\psi|^2 + \mathcal{U}  \right) \psi =0\\
   & \mathcal{U}_{tt} - \dfrac{1}{M^2} \Delta \mathcal{U} -  W \Delta (|\psi|^2) + 2  \, D W (|\psi|^2)_{xt} - \dfrac{ W D^2}{M^2} (|\psi|^2)_{xx} = 0.
  \end{aligned}
 \right. 
\end{equation}

We can rewrite the second equation in \eqref{Coupled system} as 
\begin{equation}\label{div}
\partial_t(\mathcal U_t+2WD(|\psi|^2)_x)-\nabla\cdot(\frac{1}{M^2}\nabla \mathcal U-\left(\frac{WD^2}{M^2}(|\psi|^2)_x,0)\right)^T=0.
\end{equation}

Integrating with respect to time between $0$ and t and using \eqref{V_0} and the two last equations in \eqref{2-D Z-R system 1 with epsilon}, we infer that $\mathcal U_t+2DW(|\psi|^2)_x$ is a divergence. Then, in order to reduce to a first order system, following the idea in \cite{SW} for the Zakharov system, we define the following  auxiliary (vector valued) function $V$
\begin{equation}
\label{Define V}
 \mathcal U_t+2DW(|\psi|^2)_x=\frac{1}{M}\nabla \cdot V
\end{equation}
Plugging this expression into the second equation of \eqref{Coupled system} and assuming that $\mathcal U, V, \psi$ tend to zero at infinity together with their derivatives we infer that 
\[
 \partial_t V = \dfrac{1}{M} \nabla \mathcal{U}+   W M \nabla (|\psi|^2) + \left(  \dfrac{  W D^2}{M} (|\psi|^2)_x,0   \right)^T,
\]
so that we obtain the equivalent first order system
\begin{equation}
 \label{system A U V} 
  \left \{
   \begin{aligned}
    & \psi_t - \sigma_3 \psi_x - i \epsilon \delta \psi_{xx} - i \epsilon \sigma_1 \psi_{yy} + i \epsilon \left(  \sigma_2 |\psi|^2 + \mathcal{U}   \right)\psi  = 0\\
    & \mathcal{U}_t - \dfrac{1}{M} \nabla \cdot V + 2  W D (|\psi|^2)_x = 0 \\
    & \partial_t V - \dfrac{1}{M} \nabla \mathcal{U} -  W M \nabla (|\psi|^2) - \left( \dfrac{  W D^2}{M} (|\psi|^2)_x,  0    \right)^T = 0.
   \end{aligned}
  \right.
\end{equation}
\textit{Remark}: By combining \eqref{V_0}, \eqref{Define mathcal U}, \eqref{Define V}  and the two last equations of \eqref{2-D Z-R system 1 with epsilon}, we have that $V_0$ is actually  the value of  of $V$ at $t=0.$. Recall that  the initial data of $\mathcal{U}$ is defined by $\rho_0$ and $\partial_x \phi_0$.

We now set 
\[
 \mathcal D =\mathcal{U} + \dfrac{  \sigma_1}{2} |\psi|^2
\]
then the system \eqref{system A U V} becomes
\begin{equation}
 \label{system A D V} 
  \left \{
   \begin{aligned}
    & \psi_t - \sigma_3 \psi_x - i \epsilon \delta \psi_{xx} - i \epsilon \sigma_1 \psi_{yy} + i \epsilon \left(   (\sigma_2 - \dfrac{\sigma_1}{2}) |\psi|^2 + \mathcal D  \right) \psi =0 \\
    & \mathcal D_t - \dfrac{1}{M} \nabla . V - \dfrac{ \sigma_1}{2} \left( (|\psi|^2)_t - \sigma_3 (|\psi|^2)_x  \right) + \left(  2  W D - \dfrac{ \sigma_1 \sigma_3}{2}    \right) (|\psi|^2)_x = 0 \\
    & \partial_t V - \dfrac{1}{M} \nabla \mathcal D - \left(  \dfrac{2   W (D^2 + M^2) - \sigma_1}{2M} (|\psi|^2)_x, \, \dfrac{2  W M^2 - \sigma_1}{2M} (|\psi|^2)_y    \right) = 0.
   \end{aligned}
  \right.
\end{equation}
For simplicity of notation we set 
\[
c_1= 2  W D - \dfrac{ \sigma_1 \sigma_3}{2} , \; c_2= \dfrac{2  W (D^2 + M^2) -  \sigma_1}{2M} , \; c_3 =  \dfrac{2  W M^2 -  \sigma_1}{2M}.
\]
Furthermore, we split the $\psi$ in real and imaginary part
\[  \psi = F + i G  \]
and 
\[  \nabla \psi = H + iL = (H_1,H_2)^T + i (L_1, L_2)^T.   \]
Multiplying the first equation of \eqref{system A D V}  by $\bar{\psi}$ and taking the real part, we deduce that 
\begin{equation}
 \label{*}
  \begin{split}
  (|\psi|^2)_t - \sigma_3 (|\psi|)^2_x & = i \epsilon \delta \psi_{xx} \bar{\psi} - i \epsilon \delta \bar{\psi}_{xx} \psi + i \epsilon \sigma_1 \psi_{yy} \bar{\psi} - i \epsilon \sigma_1 \bar{\psi}_{yy} \psi \\
  & = 2 \epsilon \delta (G \partial_x H_1 - F \partial_x L_1) + 2 \epsilon \sigma_1 ( G \partial_y H_2 - F \partial_y L_2).
  \end{split}
\end{equation}
We insert \eqref{*} into \eqref{system A D V} and then separate the real and imaginary parts of the first equation of the system. We furthermore apply the spatial  gradient to the first equation of $\eqref{system A D V}$ to get the equations satisfied by $H$ and $L$. This leads to the following system
\begin{align}
   \label{Main system 1_1}  &  H_t - \sigma_3 H_x + \epsilon \delta L_{xx}  + \epsilon \sigma_1 L_{yy}  -  \epsilon G \nabla \mathcal D \\
    & \qquad - \epsilon \left( \mathcal D L +  \left(  \sigma_2 - \dfrac{\sigma_1}{2}   \right) \left(  (F^2 + G^2) L + 2G (FH + GL)   \right)   \right) = 0 \nonumber \\ 
    \label{Main system 1_2}  & L_t - \sigma_3 L_x - \epsilon \delta H_{xx} - \epsilon \sigma_1 H_{yy} + \epsilon F \nabla \mathcal D \\
    & \qquad + \epsilon \left( \mathcal DH +  \left(  \sigma_2 - \dfrac{\sigma_1}{2}   \right) \left(  (F^2 + G^2) H + 2F (FH + GL)   \right)    \right) =0 \nonumber \\
    & \label{Main system 1_3} F_t - \sigma_3 F_x + \epsilon \delta G_{xx} + \epsilon \sigma_1 G_{yy} - \epsilon \left(   \left(  \sigma_2 - \dfrac{\sigma_1}{2}   \right) (F^2+G^2) + \mathcal D  \right)G = 0\\
    & \label{Main system 1_4} G_t - \sigma_3 G_x - \epsilon \delta F_{xx} - \epsilon \sigma_1 F_{yy} +   \epsilon \left(  \left(  \sigma_2 - \dfrac{\sigma_1}{2}   \right) (F^2+ G^2) + \mathcal D  \right)F =0 \\
    & \label{Main system 1_5}  \mathcal D_t - \dfrac{1}{M} \nabla . V - \epsilon \sigma_1 \delta (G \partial_x H_1 -  F \partial_x L_1) - \epsilon \sigma_1^2(G \partial_y H_2 - F \partial_y L_2)\\
    & \qquad  + 2c_1 (H_1 F + L_1 G) =0  \nonumber \\
    &\label{Main system 1_6} \partial_t V - \dfrac{1}{M} \nabla \mathcal D - 2 \left(  c_2 (H_1 F+ L_1 G),\, c_3(H_2F + L_2G)   \right)^T =0.
\end{align}
Since  $\sigma_1 \delta > 0$ , we can  perform the following  change of variables
\[
 H^*= ( \sqrt{\delta \sigma_1} H_1, \sigma_1 H_2)^T \text{ and } L^* = ( \sqrt{\delta \sigma_1} L_1, \sigma_1 L_2)^T,
\]
and  we then set $U= (H^*, L^*, F, G, \mathcal D, V)^T$. 

Therefore,  \eqref{2-D Z-R system 1 with epsilon} is rewritten as a dispersive (skew adjoint) perturbation of a symmetric hyperbolic system given by 
\begin{equation}
 \label{symmetric form for non perturbed system}
 \begin{split}
&  U_t + \left( \epsilon A_1(U) +B_1 \right) U_x + (\epsilon A_2(U) + B_2) U_y +  C(U) U \\
& \qquad = - K_1 U_{xx}  - K_2 U_{yy},
 \end{split}
\end{equation}
where $A_1, \, A_2, B_1$ and $B_2$ are symmetric matrices, $K_1, \, K_2$ are skew symmetric matrices. 
\[
 A_1(U) = \begin{pmatrix}
0_{3 \times 3} & 0_{3 \times 3} & M_1(U)^T  & 0_{3 \times 2} \\
0_{3 \times 3} & 0_{3 \times 3} & 0_{3 \times 1}  & 0_{3 \times 2} \\
M_1(U) & 0_{1 \times 3} & 0 & 0_{1 \times 2}\\
0_{2 \times 3} & 0_{2 \times 3} & 0_{2 \times 1} & 0_{2 \times 2}
 \end{pmatrix},
\]
with \[
M_1(U) = (-  \sqrt{\delta \sigma_1}G , 0, \sqrt{\delta \sigma_1} F).
\]
\[
A_2(U) = \begin{pmatrix}
0 & 0_{1 \times 3} &0_{1 \times 2} & 0  & 0_{1 \times 2} \\
0_{3 \times 1 } & 0_{3 \times 3} & 0_{3 \times 2} & M_2(U)^T & 0_{3 \times 2} \\
0_{2 \times 1} & 0_{2 \times 3} & 0_{2 \times 2} & 0_{2 \times 1} & 0_{2 \times 2} \\
0 & M_2(U) & 0_{1 \times 2} & 0 & 0_{1 \times 2} \\
0_{2 \times 1} & 0_{2 \times 3} & 0_{2 \times 2} & 0_{2 \times 1} & 0_{2 \times 2}
\end{pmatrix}
\]
with \[
 M_2(U) = (- \sigma_1G,0, \sigma_1 F).
\]

And note that $C(U)$ contains the term that is independent of $\epsilon$ which is
\[
C_1(U) = \begin{pmatrix}
 0 & 0 & 0 & 0 & 0_{1\times 5} \\
 0 & 0 & 0 & 0 & 0_{1\times 5} \\
 0 & 0 & 0 & 0 & 0_{1\times 5} \\
 0 & 0 & 0 & 0 & 0_{1\times 5} \\
 0 & 0 & 0 & 0 & 0_{1\times 5} \\
 0 & 0 & 0 & 0 & 0_{1\times 5} \\
 \dfrac{-2 c_1}{\sqrt{\delta \sigma_1}} F & 0  &  \dfrac{-2 c_1}{\sqrt{\delta \sigma_1}} G & 0 & 0_{1 \times 5} \\
  \dfrac{-2 c_2}{\sqrt{\delta \sigma_1}} F & 0 &  \dfrac{-2 c_2}{\sqrt{\delta \sigma_1}} G & 0  & 0_{1 \times 5}  \\
  0 & \dfrac{-2 c_3}{\sigma_1} F & 0 & \dfrac{-2 c_3}{\sigma_1} G & 0_{1 \times 5}
 \end{pmatrix}
\]

Next,we prove that if the initial data $U(0,x,y)=U_0 \in (H^s(\R^2))^9, s>2$, then there exists $T=T(\norma{U_0}_{(H^s(\R^2))^9})$ such that equation \eqref{symmetric form for non perturbed system} has a unique solution in $L^\infty([0,T], (H^s(\R^2))^9)$.

The  proof of the existence of solution is standard and proceeds via a classical iteration scheme for symmetric hyperbolic system (see \cite{M, KM}). The presence of $C_1(U)$, unfortunately, leads to the existence time of order $0(1)$.

The uniqueness of solution of \eqref{symmetric form for non perturbed system} is classically obtained  by estimating the difference of two solutions since the dispersive part does not contribute  to the $L^2$ energy estimate.

The last step will be the recovering the solution of \eqref{2-D Z-R system 1 with epsilon} from the solution of \eqref{symmetric form for non perturbed system}. 

First, set $\psi = F + i G$, then \eqref{Main system 1_3} and \eqref{Main system 1_4} imply that 
	\[
	\psi_t - \sigma_3 \psi - i \epsilon \delta \psi_{xx} - i \epsilon \sigma_1 \psi_{yy} + i \epsilon \left( (\sigma_2- \dfrac{\sigma_1}{2})|\psi|^2 + \mathcal{D}   \right) \psi=0.
	\]

From \eqref{Main system 1_1}-\eqref{Main system 1_4} we can derive an $L^2$ estimate of $\mathscr{W}= (\nabla F -H, \nabla G - L)$, that implies
	\[
	\norma{\mathscr{W}(t)}^2_{L^2} \leq e^{Ct} \norma{\mathscr{W}(0)}^2_{L^2},
	\]
	thus, $\nabla \psi = H+ iL$.\\
	 Then \eqref{Main system 1_5} -\eqref{Main system 1_6} implies the last two equations of \eqref{Coupled system}.
	
 Next, we are going to recover $(\rho,\phi)$. From \eqref{Coupled system} we already know $\psi$ and $\mathcal U.$
	
	Let $\phi$ be the unique solution of the following linear wave equation
\[	\phi_{tt} - \dfrac{1}{M^2} \left(  \Delta \phi + D (|\psi|^2)_x  \right) + (|\psi|^2)_t = 0 \]
with initial data given by $\phi(t=0)=\phi_0$ and $\phi_t(t=0)= -\dfrac{1}{M^2} \rho_0 - |\psi_0|^2$.

Define 
\[ \rho = \dfrac{1}{W} (\mathcal{U} - WD \phi_x). \]
Then we get that $(\psi, \rho, \phi)$ solves \eqref{2-D Z-R decoupled equations} uniquely with respect to the given initial data.

Next, let $\tilde{\phi}(t,x,y)$ be the unique solution of the differential equation
\[\tilde{\phi}_t + \dfrac{1}{M^2} \rho + |\psi|^2=0,\]
with initial data $\tilde{\phi}(t=0) = \phi_0$. 

From the second equation in  \eqref{2-D Z-R decoupled equations} we get 

$$\rho_{tt}+\Delta \tilde{\phi}_t+D|\psi|^2_{xt}=0$$
and integrating in time and using the initial data, we get the following system
\begin{equation}
\label{rho tilde phi}
\left \{
\begin{aligned}
& \rho_t + \Delta \tilde{\phi} + D(|\psi|^2)_x=0\\
& \tilde{\phi}_t + \dfrac{1}{M^2} \rho + |\psi|^2 =0.
\end{aligned}
\right.
\end{equation}
Taking the time derivative of the second  equation in \eqref{rho tilde phi} we get 
\[
\tilde{\phi}_{tt} - \dfrac{1}{M^2} (\Delta \tilde{\phi} + D(|\psi|^2)_x) + (|\psi|^2)_t=0.
\]
Note that the initial data is also given by $\tilde{\phi}(t=0)=\phi_0$ and $\tilde{\phi}_t(t=0)= -\dfrac{1}{M^2} \rho_0 - |\psi_0|^2$. Therefore we have $\tilde{\phi} = \phi,$ achieving to prove that  $(\psi,\rho, \phi)$ solves the original Zakharov-Rubenchik system.



\end{proof}
\section{The perturbed Z-R/B-R system}
\label{section Perturbed Z-R/B-R system}
In this section, we consider the Cauchy problem for \eqref{2-D Z-R system 1} when it is perturbed by the line solitary wave $Q$ given by \eqref{line soliton}. That means, we will find solutions of \eqref{2-D Z-R system 1} of the form $(\psi+\phi_1, \rho + \phi_2, \phi + \phi_3)$, where we denote $Q=(\phi_1, \phi_2, \phi_3)$. The new system reads
\begin{equation}
\left \{
 \label{perturbed system}
 \begin{aligned}
  & \psi_t - \sigma_3 \psi_x - i  \epsilon \delta \psi_{xx} - i \epsilon \sigma_1 \psi_{yy} + i \epsilon \left \{  \sigma_2 |\psi|^2 + 2\sigma_2 \Re (\phi_1 \bar{\psi})  + W (\rho+ D\phi_x) \right \} \phi_1  \\
  & \qquad \qquad  + i \epsilon \{ \sigma_2 |\psi+\phi_1|^2 + W(\rho + \phi_2 + D(\phi + \phi_3)_x)\} \psi =0 \\
  & \rho_t + \Delta \phi + D (|\psi|^2 + 2\Re (\phi_1 \bar{\psi}))_x =0 \\
  & \phi_t + \dfrac{1}{M^2} \rho + |\psi|^2 +2\Re (\phi_1 \bar{\psi}) = 0,
 \end{aligned}
 \right.
\end{equation}

\begin{remark} A natural way to solve the Cauchy problem for \eqref{perturbed system} would be to use the "dispersive method" in \cite {PoSa}. However the fact that  the line soliton does not decay to $0$ in the transverse direction leads to a difficulty when dealing with a new nonlocal linear term and seems to preclude to extend this method in a straightforward way. Therefore we will apply the method of the previous section to this case.
\end{remark}
In the  first step, we need to rewrite \eqref{perturbed system} in the form of a skew-adjoint perturbation of a symmetric hyperbolic system. 

Using the fact that $Q=(\phi_1, \phi_2, \phi_3)$ is also a  solution of \eqref{2-D Z-R system 1 with epsilon} with $\epsilon=1$ then by the same  calculations as in the previous  section, we obtain that $(H_r, L_r, F_r, G_r, D_r, V_r)^T$ is a solution of \eqref{Main system 1_1}-\eqref{Main system 1_6} with $\epsilon=1$, where
\begin{equation}
 \label{residual}
 \left \{
  \begin{aligned}
   & H_r = \nabla ( \Re \phi_1), \quad L_r = \nabla (\Im \phi_1), \\
   & F_r = \Re \phi_1, \quad G_r = \Im \phi_1, \\
   &\mathcal  D_r = \mathcal U_r + \dfrac{\sigma_1}{2} |\phi_1|^2 \text{ with  } \mathcal U_r = W \phi_2 + W D (\phi_3)_x ,\\
   & \partial_t V_r - \dfrac{1}{M} \nabla \mathcal U_r - W M \nabla (|\phi_1|^2) - \left( \dfrac{W D^2}{M} (|\phi_1|^2)_x, 0  \right)^T = 0.
  \end{aligned}
 \right.
\end{equation}
Similarly, if $(\psi,\rho, \phi)$ is a solution of \eqref{perturbed system} or if $(\psi+ \phi_1, \rho+ \phi_2, \phi+ \phi_3)$ is a solution of \eqref{2-D Z-R system 1 with epsilon} (with $\epsilon=1$), then $(\tilde{H}, \tilde{L}, \tilde{F}, \tilde{G}, \tilde{\mathcal D}, \tilde{V})^T$ is a solution of \eqref{Main system 1_1}-\eqref{Main system 1_6} with $\epsilon=1$, where
\begin{equation}
 \label{residual 2}
  \left \{
   \begin{aligned}
    & \tilde{H} = \nabla ( \Re (\psi+ \phi_1)), \quad \tilde{L} = \nabla ( \Im (\psi+ \phi_1)), \\
    & \tilde{F} = \Re (\psi+\phi_1) , \quad \tilde{G} = \Im (\psi + \phi_1),\\
    & \tilde{\mathcal D} =\tilde{ \mathcal U} + \dfrac{\sigma_1}{2} |\psi + \phi_1|^2 \text{ with } \tilde{\mathcal U} = W (\rho + \phi_2) + W D (\phi+ \phi_3)_x , \\
    & \partial_t \tilde{V} - \dfrac{1}{M} \nabla \tilde{\mathcal U} - W M \nabla (|\psi+ \phi_1|^2) - \left(  \dfrac{W D^2}{M} (|\psi+ \phi_1|^2)_x, 0  \right)^T =0.
   \end{aligned}
  \right.
\end{equation}
We now set $(H,L,F,G,\mathcal{D},V)^T = (\tilde{H}, \tilde{L}, \tilde{F}, \tilde{G}, \tilde{\mathcal D}, \tilde{V})^T - (H_r, L_r, F_r, G_r, D_r, V_r)^T$, more precisely, we have
\begin{equation}
 \label{residual 3} 
  \left \{
   \begin{aligned}
    & H = \nabla (\Re \psi), \quad L= \nabla (\Im \psi) , \\
    & F = \Re \psi, \quad G = \Im \psi , \\
    & \mathcal D = \mathcal U + \dfrac{\sigma_1}{2} (|\psi|^2 + 2 \Re (\phi_1 \bar{\psi} ))  \text{ with } \mathcal U = W \rho + W D \phi_x , \\
    & \partial_t V - \dfrac{1}{M} \nabla \mathcal U - W M \nabla (|\psi|^2 + 2 \Re(\phi_1 \bar{\psi})) - \left(  \dfrac{W D^2}{M} (|\psi|^2 + 2 \Re (\phi_1 \bar{\psi}))_x, 0 \right)^T =0.
   \end{aligned}
  \right.
\end{equation}
  
\textit{Remark}: In order to define the initial data for $V$, we use the following expression
 \begin{align*}
 \dfrac{1}{M} \nabla \cdot V & = \partial_t(\tilde{ \mathcal U} - \tilde{ \mathcal U}_r) + 2 DW (|\psi|^2 + 2 \Re (\phi_1 \bar{\psi}))_x \\
 & = -W \Delta \phi -\dfrac{DW}{M^2} \rho_x.
 \end{align*}
Combining \eqref{Main system 1_1}-\eqref{Main system 1_6}, \eqref{residual} and \eqref{residual 2}, it transpires that  $(H,L,F,G,\mathcal D,V)^T$ is a solution of 
\begin{equation}
 \label{main system 2}
  \left \{
   \begin{aligned}
   & H_t - \sigma_3 H_x + \delta L_{xx} + \sigma_1 L_{yy} - (G + G_r) \nabla \mathcal D  + \mathcal{R}_1 =0 \\
   & L_t - \sigma_3 L_x - \delta H_{xx} - \sigma_1 H_{yy} + (F + F_r) \nabla \mathcal D + \mathcal{R}_2 =0 \\
   & F_t - \sigma_3 F_x + \delta G_{xx} + \sigma_1 G_{yy} + \mathcal{R}_3 =0 \\
   & G_t - \sigma_3 G_x - \delta F_{xx} - \sigma_1 F_{yy} + \mathcal{R}_4 = 0\\
   & \mathcal D_t - \dfrac{1}{M} \nabla \cdot V -\sigma_1 \delta \left( (G+G_r)\partial_x H_1 - (F +F_r) \partial_x L_1  \right) \\
   & \qquad - \sigma_1^2 \left( (G+ G_r) \partial_y H_2 - (F+F_r) \partial_y L_2  \right) + \mathcal{R}_5 =0 \\
   & \partial_t V - \dfrac{1}{M} \nabla \mathcal D + \mathcal{R}_6 =0.
   \end{aligned}
  \right.
\end{equation}
Where
\[
 \begin{aligned}
 & \mathcal{R}_1 \\
 & =- G \nabla \mathcal D_r - \tilde{ \mathcal D}L - \mathcal D L_r \\
 & \quad - \left( \sigma_2 - \dfrac{\sigma_1}{2}  \right) \Bigg ( \left( \tilde{F}^2 + \tilde{G}^2   \right) L  + \left( F^2 + G^2 + 2 F F_r + 2G G_r  \right) L_r  \\
 & \qquad + 2 G \left( \tilde{H} \tilde{F} + \tilde{G} \tilde{L}  \right) + 2 G_r \left( \tilde{H} F + H F_r + \tilde{L} G + L G_r  \right) \Bigg ),
 \end{aligned}
\]

\[
 \begin{aligned}
  & \mathcal{R}_2 \\
  &  =F \nabla \mathcal D_r + \tilde{\mathcal D} H + \mathcal D H_r  \\
  & \quad+ \left( \sigma_2 - \dfrac{\sigma_1}{2}  \right) \Bigg ( (\tilde{F}^2 + \tilde{G}^2) H + \left( F^2 + G^2 + 2F F_r + 2G G_r  \right) H_r \\
  & \quad \qquad + 2 F \left( \tilde{H} \tilde{F} + \tilde{G} \tilde{L}  \right) + 2 F_r \left( \tilde{H} F + H F_r + \tilde{G} L + G L_r  \right) \Bigg ),
 \end{aligned}
\]

\[
\begin{aligned}
 & \mathcal{R}_3  = - \Bigg ( \left( \sigma_2 - \dfrac{\sigma_1}{2}  \right) \left( \tilde{F}^2 + \tilde{G}^2  \right) + \tilde{\mathcal D}  \Bigg ) G \\
 & \quad \qquad  - \Bigg ( \left( \sigma_2 - \dfrac{\sigma_1}{2}  \right) \left( F^2 + G^2  + 2 F F_r + 2 G G_r \right) + \mathcal D \Bigg ) G_r,
\end{aligned}
\]

\[
\begin{aligned}
 & \mathcal{R}_4 =  \Bigg ( \left( \sigma_2 - \dfrac{\sigma_1}{2}  \right) \left( \tilde{F}^2 + \tilde{G}^2  \right) + \tilde{\mathcal D}  \Bigg ) F \\
 & \quad \qquad  + \Bigg ( \left( \sigma_2 - \dfrac{\sigma_1}{2}  \right) \left( F^2 + G^2  + 2 F F_r + 2 G G_r \right) + \mathcal D \Bigg ) F_r,
\end{aligned}
\]

\[
 \begin{aligned}
  & \mathcal{R}_5 = \bigg ( \left(  2c_1 \tilde{H_1} + \sigma_1 \delta \Im (\phi_1)_{xx}  \right) F + \left(  2c_1 \tilde{L} - \sigma_1 \delta \Re (\phi_1)_{xx}  \right)G \bigg )\\
  & \quad \qquad + 2c_1( H_1 F_r + L_1 G_r ),
 \end{aligned}
\]

\[
 \begin{aligned}
  & \mathcal{R}_6 \\
  & \quad  = -2 \bigg ( c_2 (\tilde{H_1} F + F_r H_1 + \tilde{L_1} G + G_r L_1); c_3 ( \tilde{H_2} F + \tilde{L_2} G + F_r H_2 + G_r L_2  ) \bigg )^T.
 \end{aligned}
\]
Similarly to the last Section, if $\rho_1 \delta > 0$ , we can  change variables as follows
\[
 H^*= ( \sqrt{\delta \sigma_1} H_1, \sigma_1 H_2)^T \text{ and } L^* = (\sqrt{\delta \sigma_1} L_1, \sigma_1 L_2)^T,
\]
and then we set $U= (H^*, L^*, F, G, \mathcal D, V)^T$. Therefore,  the perturbation of \eqref{2-D Z-R system 1} by the line solitary wave $Q$ is rewritten as a dispersive perturbation of a symmetric hyperbolic given by 
\begin{equation}
 \label{symmetric form for  perturbed system}
 \begin{split}
&  U_t + \left(  A_1(U) +B_1 (\phi_1) + C_1 \right) U_x + (A_2(U) + B_2(\phi_1) + C_2) U_y \\
& \qquad + (C(U)+ \tilde{C}(Q)) U  = - K_1 U_{xx}  - K_2 U_{yy}.
 \end{split}
\end{equation}
Where, with $j\in \{ 1,2\}$,  $A_j, \, B_j, \, C_j $ are symmetric matrices, $K_j$ are skew symmetric and $C_j$ are constant matrices. $A_j$ have the same form as in the proof of Theorem \ref{local existence for unperturbed system}, $C(U)$ contains quadratic and  linear elements, and $B_j$ have the form
\[
 B_1(\phi_1) = \begin{pmatrix}
0_{3 \times 3} & 0_{3 \times 3} & N_1(\phi_1)^T  & 0_{3 \times 2} \\
0_{3 \times 3} & 0_{3 \times 3} & 0_{3 \times 1}  & 0_{3 \times 2} \\
N_1(\phi_1) & 0_{1 \times 3} & 0 & 0_{1 \times 2}\\
0_{2 \times 3} & 0_{2 \times 3} & 0_{2 \times 1} & 0_{2 \times 2}
 \end{pmatrix},
\]
with \[
N_1(\phi_1) = (-  \sqrt{\delta \sigma_1}G_r , 0, \sqrt{\delta \sigma_1} F_r).
\]
\[
B_2(\phi_1) = \begin{pmatrix}
0 & 0_{1 \times 3} &0_{1 \times 2} & 0  & 0_{1 \times 2} \\
0_{3 \times 1 } & 0_{3 \times 3} & 0_{3 \times 2} & N_2(\phi_1)^T & 0_{3 \times 2} \\
0_{2 \times 1} & 0_{2 \times 3} & 0_{2 \times 2} & 0_{2 \times 1} & 0_{2 \times 2} \\
0 & N_2(\phi_1) & 0_{1 \times 2} & 0 & 0_{1 \times 2} \\
0_{2 \times 1} & 0_{2 \times 3} & 0_{2 \times 2} & 0_{2 \times 1} & 0_{2 \times 2}

\end{pmatrix}
\]
with \[
 N_2(\phi_1) = (-\sigma_1 G_r,0, \sigma_1 F_r).
\]
Furthermore, note that the matrix $\tilde{C}(Q)$ depends on $(\phi_1, \phi_2, \partial_x \phi_3)$ making the following analysis holds for both cases when $Q$ is the bright or the dark soliton.\\
We have written the perturbation of \eqref{2-D Z-R system 1} by the line solitary wave $Q$ to the form of a symmetric hyperbolic system. Applying the same method as in the proof of Theorem \ref{local existence for unperturbed system} we obtain the following result.

\begin{theorem}
 \label{local existence for perturbed system}
Let $\delta \sigma_1 >0$, $s>2$. Let the initial data $(\psi_0,\rho_0,\phi_0)\in H^{s+1}(\R^2)\times H^s(\R^2)\times H^{s+1}(\R^2)$ and satisfies that
\begin{equation}
\label{V_0 2}
WM (-\Delta \phi_0 - \dfrac{D}{M^2} \partial_x \rho_0) = \nabla \cdot V_0,
\end{equation} 
with $V_0 \in H^s(\R^2)^2$.

Then there exists $T>0$ and a unique solution $(\psi,\rho,\phi)\in L^\infty([0,T];H^{s+1}(\R^2))\times  L^\infty([0,T];H^{s}(\R^2))\times L^\infty([0,T];H^{s+1}(\R^2))$ of  \eqref{2-D Z-R system 1} when it is perturbed by the line soliton $Q=(\phi_1, \phi_2, \phi_3)$.
\end{theorem}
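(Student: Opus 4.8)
The plan is to mirror the proof of Theorem \ref{local existence for unperturbed system} step by step, the essential observation being that the preceding analysis has already recast the perturbed problem \eqref{perturbed system} into the symmetric-hyperbolic-plus-skew-adjoint form \eqref{symmetric form for perturbed system}. Since $\delta\sigma_1>0$, the matrices $A_1,A_2,B_1(\phi_1),B_2(\phi_1)$ are symmetric and $K_1,K_2$ are skew-symmetric, so the dispersive part contributes nothing to the $L^2$ energy balance and the problem is governed entirely by its hyperbolic structure. First I would set up the standard iteration scheme for quasilinear symmetric hyperbolic systems (as in \cite{M, KM}): define iterates $U^{(n+1)}$ by freezing the nonlinear coefficients at $U^{(n)}$, so that each step solves a linear system with symmetric principal part and with the dispersive correction $-K_1U_{xx}-K_2U_{yy}$ treated as a skew-adjoint perturbation.

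The energy estimate is carried out by applying $\Lambda^s=\bra{D}^s$ to \eqref{symmetric form for perturbed system}, pairing with $\Lambda^s U$ in $L^2$, and exploiting symmetry of $A_j,B_j,C_j$ together with skew-symmetry of $K_j$ to absorb the top-order terms. The new feature relative to the unperturbed case is the presence of the soliton-dependent matrices $B_1(\phi_1),B_2(\phi_1),\tilde C(Q)$ and the remainder terms $\mathcal R_1,\dots,\mathcal R_6$ collected in \eqref{main system 2}, which involve the profile $Q=(\phi_1,\phi_2,\phi_3)$ and its derivatives through $F_r,G_r,H_r,L_r,\mathcal D_r,V_r$. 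The key point I would emphasize is that although the dark soliton component $P$ does not decay in $x$ (it is of the form $\tanh$ up to a linear term), only $\phi_1,\phi_2$ and $\partial_x\phi_3$ actually enter the coefficients and remainders, and $\phi_1=e^{i\lambda t}e^{i(c+\sigma_3)x/2\delta}R$, $\phi_2=aR^2$, $\partial_x\phi_3=bR'{}^2$-type quantities are bounded with bounded derivatives (the $\mathrm{sech}$ or $\tanh$ profiles have all derivatives in $L^\infty$). Thus the soliton contributes only $L^\infty$ coefficients, and by Moser-type product and commutator estimates the terms $\mathcal R_i$ are controlled in $H^s$ by a polynomial in $\norma{U}_{(H^s)^9}$ with coefficients depending on the fixed soliton norms. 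This yields a differential inequality $\frac{d}{dt}\norma{U}_{(H^s)^9}^2\le C(\norma{U}_{(H^s)^9})$, giving a uniform existence time $T>0$ on $[0,T]$.

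Uniqueness and convergence of the scheme follow as in the unperturbed case from an $L^2$ estimate on the difference of two solutions: the dispersive skew-adjoint part drops out of the $L^2$ pairing, the symmetric hyperbolic part is handled by integration by parts, and the soliton-dependent coefficients are again merely bounded multipliers, so a Gronwall argument closes the estimate. Finally I would recover $(\psi,\rho,\phi)$ from $U=(H^*,L^*,F,G,\mathcal D,V)^T$ exactly as in the proof of Theorem \ref{local existence for unperturbed system}: set $\psi=F+iG$, use the $L^2$ estimate on $\mathscr W=(\nabla F-H,\nabla G-L)$ to identify $\nabla\psi=H+iL$, reconstruct $\mathcal U$ and hence $\rho$ from $\mathcal D$, solve the linear wave equation for $\phi$ with the prescribed initial data, and verify via \eqref{rho tilde phi} that the recovered triple solves the perturbed system \eqref{perturbed system}. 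The main obstacle I anticipate is verifying that the non-decaying dark-soliton profile does not spoil the energy estimate; the resolution is precisely the structural remark already recorded before the statement, namely that $\tilde C(Q)$ and all remainders depend on $Q$ only through $(\phi_1,\phi_2,\partial_x\phi_3)$, all of which are globally bounded with bounded derivatives, so the perturbation enters as a bounded (not growing) coefficient and the Schochet--Weinstein machinery applies verbatim.
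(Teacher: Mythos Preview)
Your overall strategy matches the paper's: reduce to the symmetric-hyperbolic-plus-skew-adjoint form \eqref{symmetric form for  perturbed system}, run the iteration scheme with $H^s$ energy estimates, prove uniqueness by an $L^2$ difference estimate, and recover $(\psi,\rho,\phi)$ as in Theorem~\ref{local existence for unperturbed system}. However, you misidentify the principal technical obstacle. You worry about non-decay of the \emph{dark} profile in $x$ (the $\tanh$ behaviour of $P$), and you resolve it by noting that only $(\phi_1,\phi_2,\partial_x\phi_3)$ enter the coefficients. That observation is correct and already recorded in the paper, but it is not the new difficulty. The actual issue, present for both bright and dark line solitons, is that $Q$ depends on $x$ alone and is therefore constant in $y$; as a function on $\R^2$ it lies in no $H^s(\R^2)$. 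When you apply the full two-dimensional $\Lambda^s=\bra{D}^s$ and try to commute it past $B_j(\phi_1)$ or $\tilde C(Q)$, the Kato--Ponce commutator estimate used in Theorem~\ref{local existence for unperturbed system} (which needs $\norma{J^s f}_{L^2}<\infty$ for the coefficient $f$) is unavailable. Saying the soliton is ``$L^\infty$ with bounded derivatives'' does not by itself license that bound, so the machinery does \emph{not} apply verbatim as you claim.

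The paper's fix is short but essential: replace the isotropic $J^s$ by the anisotropic pair $J^s_x=\mathcal{F}^{-1}\bra{\xi_1}^s\mathcal{F}$ and $J^s_y=\mathcal{F}^{-1}\bra{\xi_2}^s\mathcal{F}$, using $J^s\sim J^s_x+J^s_y$. Since $Q$ is independent of $y$, $J^s_y$ commutes with all soliton coefficients and is harmless. For $J^s_x$ one writes $\norma{J^s_xU}_{L^2(\R^2)}=\norma{\,\norma{J^s_xU}_{L^2_x}\,}_{L^2_y}$ and applies the one-dimensional commutator estimate in $x$ slice by slice. Once this modification is made, the rest of your outline (iteration, uniqueness, recovery of $(\psi,\rho,\phi)$) goes through exactly as you describe and exactly as in the paper.
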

\begin{proof}
 The proof of Theorem \ref{local existence for perturbed system} and Theorem \ref{local existence for unperturbed system} are essentially the same except the estimates for the terms $B_j (\phi_1)$ and $\tilde{C}( Q)$.\\
In the proof of Theorem \ref{local existence for unperturbed system}, in order to estimate the derivative of order $s$, we use the commutator estimate and the Bessel potential $J^s = (1-\Delta)^{s/2}$. Although, in this case, the 1-D soliton solution $Q$ does not decay in ``$y$'' direction that make that argument is not true. Therefore, we use the fact that $J^s \thicksim J^s_x + J^s_y$, where $J_x = \mathcal{F}^{-1}\bra{\xi_1} \mathcal{F}, \, J_y=\mathcal{F}^{-1}\bra{\xi_2}\mathcal{F} $. Hence, we only need to estimate $J^s_x U$ and $J^s_y U$ instead of $J^sU$. Since $Q$ is independent of $y$, $J^s_y$ is harmless and since $\norma{J^s_x U}_{L^2} = \norma{\norma{J^s_x U}_{L^2_x}}_{L^2_y}$, we can apply the commutator estimate in one dimensional case.

 The rest of the proof proceeds exactly  as in the proof of Theorem \ref{local existence for unperturbed system}.
Again we emphasize that the same result holds true mutatis mutandi in a $\R‚\times \T$ setting, a framework that would be needed to study the stability of the line soliton with respect to periodic transverse perturbations.
\end{proof}
\vspace{0.5cm}

\section{Global solution}
\label{section Global solution}
In this section we will establish the conservation of energy for the perturbation of \eqref{2-D Z-R system 1} by the line soliton $Q$ given in Section \ref{section existence of 1-d soliton}  and as a consequence, the existence of a global weak solution when $Q$ is the dark soliton.

In order to make the calculation easier, we will consider the solution of the form 
\[ (e^{i\lambda t} e^{ i \dfrac{\sigma_3}{2 \delta} x} \psi(x,y,t), \rho(x,y,t), \phi(x,y,t)) \]
then the 1-d solitary wave will have the following form
\[  Q = (\phi_1, \phi_2, \phi_3) = (R(x), a R^2(x), b P(x)),   \]
with $R(x)$, $P(x)$ are given in Section \ref{section existence of 1-d soliton} (note that this trick will not affect  the analysis in section \ref{section Perturbed Z-R/B-R system}). \\
Then the system \eqref{perturbed system} becomes
\begin{equation}
 \label{perturbed 2-D Z-R system 1}
  \left \{
   \begin{aligned}
   & \psi_t + i (\lambda - \dfrac{\sigma_3^2}{4 \delta} ) \psi  - i \delta \psi_{xx} - i \sigma_1 \psi_{yy} \\
   & \quad + i \left \{  \sigma_2 |\psi|^2 + W (\rho+ D\rho_x) + 2 \sigma_2 \phi_1 Re (  \psi) + |\phi_1|^2 + W (\phi_2 + D \partial_x \phi_3) \right \} \psi \\
   & \quad + i \{  \sigma_2 |\psi|^2 + W (\rho+ D\phi_x) + 2 \sigma_2 \phi_1  Re  ( \psi)  \} \phi_1 = 0 \\
   & \rho_t + \Delta \phi + D (|\psi|^2 + 2 \phi_1  \Re (\psi))_x =0 \\
   & \phi_t + \dfrac{1}{M^2} \rho + |\psi|^2 + 2 \phi_1  \Re (\psi) = 0,
   \end{aligned}
  \right.
\end{equation}
In this section we will establish the energy conservation for \eqref{perturbed 2-D Z-R system 1} when it is perturbed by a line soliton $Q$ and the existence of a global weak solution when $Q$ is the dark soliton.
\begin{theorem}
 \label{Th_energy conservation}
  Let $(\psi,\rho,\phi)$ be a solution of system \eqref{perturbed 2-D Z-R system 1} obtained in Theorem \ref{local existence for perturbed system}, defined in the time interval $[0,T]$. Then the quantity
  \begin{equation}
   \label{energy conservation}
   \begin{aligned}
   E  & = (\lambda -\dfrac{\sigma_3^2}{4\delta}) \norma{\psi}_{L^2}^2  + \delta \norma{\psi_x}_{L^2}^2 + \sigma_1 \norma{\psi_y}_{L^2}^2 \\
   & \quad + \dfrac{\sigma_2}{2} \norma{|\psi|^2 + 2 \phi_1 \Re(\psi)}_{L^2}^2 + \sigma_2 \norma{\phi_1 \psi}_{L^2}^2 + \dfrac{W}{2 M^2} \norma{\rho}_{L^2}^2 + \dfrac{W}{2} \norma{\nabla \phi}_{L^2}^2 \\
   & \quad - \int W (M^2 + D^2) |\phi_1|^2 |\psi|^2 + \int W (\rho + D \phi_x) (|\psi|^2 + 2 \phi_1 \Re (\psi)).
   \end{aligned} 
  \end{equation}
  is conserved for $t \in [0,T]$.
\end{theorem}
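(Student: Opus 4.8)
The plan is to recognize \eqref{perturbed 2-D Z-R system 1} as a Hamiltonian system whose Hamiltonian is exactly $E$, so that conservation follows from the antisymmetry of the symplectic structure rather than from a brute-force cancellation. Because we work in the phase-shifted comoving frame (which is why no first-order term $\sigma_3\psi_x$ survives), the profile $Q=(\phi_1,\phi_2,\phi_3)$ is $t$-independent and only $\psi,\rho,\phi$ carry time derivatives. I would first record the variational derivatives of $E$, obtained after integrating by parts in the $\norma{\psi_x}_{L^2}^2$, $\norma{\psi_y}_{L^2}^2$ and $\norma{\nabla\phi}_{L^2}^2$ terms,
\begin{align*}
\frac{\delta E}{\delta\bar\psi} &= \Big(\lambda-\tfrac{\sigma_3^2}{4\delta}\Big)\psi-\delta\psi_{xx}-\sigma_1\psi_{yy}+\sigma_2 N(\psi+\phi_1)+W(\rho+D\phi_x)(\psi+\phi_1)\\
&\quad+\sigma_2\phi_1^2\psi-W(M^2+D^2)\phi_1^2\psi,\\
\frac{\delta E}{\delta\rho}&=\tfrac{W}{M^2}\rho+WN,\qquad \frac{\delta E}{\delta\phi}=-W\Delta\phi-WD\,N_x,
\end{align*}
where $N=|\psi|^2+2\phi_1\Re(\psi)$, and then exploit the identity $\tfrac{dE}{dt}=2\Re\int\overline{\tfrac{\delta E}{\delta\bar\psi}}\,\psi_t+\int\tfrac{\delta E}{\delta\rho}\rho_t+\int\tfrac{\delta E}{\delta\phi}\phi_t$ valid for the real functional $E$.

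The acoustic block is the easy part and needs no information about the soliton. Reading off the last two equations of \eqref{perturbed 2-D Z-R system 1} directly gives $\rho_t=-\Delta\phi-D\,N_x=\tfrac1W\tfrac{\delta E}{\delta\phi}$ and $\phi_t=-\tfrac1{M^2}\rho-N=-\tfrac1W\tfrac{\delta E}{\delta\rho}$. Substituting these,
\[
\int\tfrac{\delta E}{\delta\rho}\rho_t+\int\tfrac{\delta E}{\delta\phi}\phi_t=\tfrac1W\int\Big(\tfrac{\delta E}{\delta\rho}\tfrac{\delta E}{\delta\phi}-\tfrac{\delta E}{\delta\phi}\tfrac{\delta E}{\delta\rho}\Big)=0,
\]
the cancellation coming precisely from the opposite signs in the $(\rho,\phi)$ flow.

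The Schr\"odinger block is the crux and the step I expect to be the main obstacle. Expanding the braces $\{\cdots\}\psi+\{\cdots\}\phi_1$ in the first equation of \eqref{perturbed 2-D Z-R system 1} and comparing with $\tfrac{\delta E}{\delta\bar\psi}$, every term matches except for the purely soliton-generated potential, so that
\[
\psi_t+i\,\tfrac{\delta E}{\delta\bar\psi}=-iW\big(\phi_2+D\,\partial_x\phi_3+(M^2+D^2)\phi_1^2\big)\psi.
\]
The bracket on the right vanishes by the profile relations: with the normalization $c=0$ of this section, \eqref{param} gives $a=-M^2$ and $b=-D$, while $\phi_2=a\phi_1^2$ and, using $R^2=P'$ in both the bright case \eqref{bright soliton} and the dark case \eqref{dark soliton}, $\partial_x\phi_3=b\phi_1^2$; hence $\phi_2+D\partial_x\phi_3=(a+Db)\phi_1^2=-(M^2+D^2)\phi_1^2$. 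Therefore $\psi_t=-i\tfrac{\delta E}{\delta\bar\psi}$ exactly, and
\[
2\Re\int\overline{\tfrac{\delta E}{\delta\bar\psi}}\,\psi_t=2\Re\Big(-i\,\bigl\lVert\tfrac{\delta E}{\delta\bar\psi}\bigr\rVert_{L^2}^2\Big)=0,
\]
which together with the acoustic block yields $\tfrac{dE}{dt}=0$. The delicate point is exactly isolating this residual potential and recognizing it as the profile identity; the correction terms $W(\rho+D\phi_x)N$ and $-W(M^2+D^2)\phi_1^2|\psi|^2$ in $E$ are what absorb the soliton background. An equivalent but more tedious route is to differentiate $E$ term by term, substitute all three equations, and cancel by hand.

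Finally I would address the functional-analytic legitimacy. The solution from Theorem \ref{local existence for perturbed system} lies in $L^\infty([0,T];H^{s+1}\times H^s\times H^{s+1})$ with $s>2$, amply enough to render each integral in $E$ finite and every integration by parts valid; the only mild subtlety is that $\phi_1=R(x)$ does not decay in $y$, but in each term it multiplies factors in $H^s(\R^2)$, so integrability persists. To differentiate $E$ in $t$ rigorously I would carry out the computation on smooth (e.g.\ frequency-truncated) approximating solutions, for which all manipulations are classical, and pass to the limit using the $H^s$ bounds and continuous dependence; alternatively, one first upgrades the solution to $C([0,T];H^{s+1}\times H^s\times H^{s+1})$ and differentiates directly.
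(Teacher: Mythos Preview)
Your proof is correct and takes a genuinely different route from the paper's. The paper proceeds by the standard direct computation: multiply the Schr\"odinger equation by $\bar\psi_t$, integrate and take the imaginary part, then feed in the $\rho$- and $\phi$-equations to convert the residual $\int W(\rho_t+D\phi_{xt})N$ into exact time derivatives of $\tfrac{W}{2M^2}\|\rho\|^2$ and $\tfrac{W}{2}\|\nabla\phi\|^2$; only at the very last step does it invoke the profile identity $a+Db=-(M^2+D^2)$ to replace $\int W(\phi_2+D\partial_x\phi_3)|\psi|^2$ by $-W(M^2+D^2)\int\phi_1^2|\psi|^2$. You instead exhibit the Hamiltonian structure explicitly: you compute $\delta E/\delta\bar\psi$, $\delta E/\delta\rho$, $\delta E/\delta\phi$ and verify $\psi_t=-i\,\delta E/\delta\bar\psi$, $(\rho_t,\phi_t)=\tfrac1W(\delta E/\delta\phi,\,-\delta E/\delta\rho)$, so that $\dot E=0$ follows from the antisymmetry of $J=\mathrm{diag}(-i,\tfrac1W\begin{psmallmatrix}0&1\\-1&0\end{psmallmatrix})$ without term-by-term cancellation. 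Your approach is more conceptual and explains \emph{why} precisely those correction terms $-W(M^2+D^2)\int\phi_1^2|\psi|^2$ and $W\int(\rho+D\phi_x)N$ belong in $E$: they are what is needed to make the residual potential in $\psi_t+i\,\delta E/\delta\bar\psi$ match the profile identity. The paper's approach is more pedestrian but entirely self-contained; both are equally rigorous and both rest on the same soliton relation $\phi_2+D\partial_x\phi_3=-(M^2+D^2)\phi_1^2$ (which, as you correctly note, requires the normalization $c=0$ built into this section).
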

\begin{proof}
 We multiply the first equation in \eqref{perturbed 2-D Z-R system 1} by $\partial_t \bar{\psi}$, integrate the result and take its imaginary part to get successively
 \begin{equation}
 \label{energy 1}
  (\lambda - \dfrac{\sigma_3^2}{4 \delta} ) \Re \int \psi \bar{\psi}_t = \dfrac{1}{2} (\lambda - \dfrac{\sigma_3^2}{4 \delta} ) \int (|\psi|^2)_t,
 \end{equation}
 \begin{equation}
 \label{energy 2}
  - \delta \Re \int \psi_{xx} \bar{\psi}_t = \delta \Re \int \psi_x \bar{\psi}_{xt} = \dfrac{1}{2} \delta \int (|\psi_x|^2)_t,
 \end{equation}
 \begin{equation}
 \label{energy 3}
  - \sigma_1 \Re \int \psi_{yy} \bar{\psi}_t = \dfrac{1}{2} \sigma_1 \int (|\psi_y|^2)_t,
 \end{equation}
 \begin{equation}
 \label{energy 4}
  \begin{aligned}
   & \Re \int \left(  \sigma_2 |\psi|^2 + W (\rho+ D\phi_x) + 2 \sigma_2 \phi_1 \Re ( \psi) + \sigma_2 |\phi_1|^2 + W (\phi_2 + D \partial_x \phi_3)   \right) \psi \bar{\psi}_t \\
   & \quad = \dfrac{1}{2} \int  \left(  \sigma_2 |\psi|^2 + W (\rho+ D\phi_x) + 2 \sigma_2  \phi_1 \Re (\psi) + \sigma_2 |\phi_1|^2 + W (\phi_2 + D \partial_x \phi_3)   \right) (|\psi|^2)_t \\
   & \quad = \dfrac{1}{2} \int \dfrac{1}{2} \sigma_2 (|\psi|^4)_t + W (\rho+D \phi_x) (|\psi|^2)_t + 2 \sigma_2 \phi_1  \Re (\psi) (|\psi|^2)_t  \\
   & \qquad + \left( \left( \sigma_2 |\phi_1|^2 + W (\phi_2 + D \partial_x \phi_3)  \right) |\psi|^2 \right)_t,
  \end{aligned}
 \end{equation}
 \begin{equation}
 \label{energy 5}
  \begin{aligned}
   & \Re \int \left(  \sigma_2 |\psi|^2 + W (\rho+ D\phi_x) + 2 \sigma_2  \phi_1  \Re (\psi)  \right) \phi_1 \bar{\psi}_t \\
   & \quad = \int \sigma_2 |\psi|^2 (\phi_1  \Re (\psi))_t + W (\rho+ D \phi_x) (\phi_1 \Re(\psi))_t + \sigma_2 | \phi_1  \Re(\psi)|^2_t.
  \end{aligned}
  \end{equation}
 Combining \eqref{energy 1}, \eqref{energy 2}, \eqref{energy 3}, \eqref{energy 4} and \eqref{energy 5} we obtain
 \begin{align*}
  0 & = \dfrac{d}{dt} \int \bigg ( \dfrac{1}{2} (\lambda - \dfrac{\sigma_3^2}{4 \delta}) |\psi|^2 + \dfrac{\delta}{2} |\psi_x|^2 + \dfrac{\sigma_1}{2} |\psi_y|^2 + \dfrac{\sigma_2}{4} |\psi|^4 \\
  & \quad + \sigma_2 |\psi|^2 \phi_1  \Re (\psi) + \dfrac{1}{2} \left( \sigma_2 |\phi_1|^2 + W (\phi_2 + D \partial_x \phi_3)  \right) |\psi|^2 + \sigma_2 |\phi_1 \Re (\psi)|^2 \\
  & \quad + \dfrac{1}{2} W (\rho + D \phi_x) (|\psi|^2 + 2 \phi_1  \Re(\psi) ) \bigg ) \\
  & \quad - \dfrac{1}{2} \int W (\rho_t + D  \phi_{xt})(|\psi|^2 + 2 \phi_1 \Re( \psi) )
 \end{align*}
 From the second and the third equation in \eqref{perturbed 2-D Z-R system 1}, we get
 \[
  \int \rho_t (|\psi|^2 + 2 \phi_1 \Re(\psi) ) = - \int \rho_t (\phi_t + \dfrac{1}{M^2} \rho ) = - \int \rho_t \phi_t - \dfrac{1}{2 M^2} \int \rho^2_t ,
 \]
 and 
 \begin{align*}
  \int D  \phi_{xt} (|\psi|^2 + 2 \phi_1 \Re(\psi) ) & = - \int \phi_t  \, D (|\psi|^2 + 2 \phi_1  \Re(\psi) )_x \\
  & = \int \phi_t (\rho_t + \Delta \phi ) \\
  & = \int \phi_t \rho_t -\dfrac{1}{2} \int (|\nabla \phi|^2)_t.
 \end{align*}
 That implies
 \begin{align*}
  0 & = \dfrac{d}{dt} \int \bigg ( \dfrac{1}{2} (\lambda -\dfrac{\sigma_3^2}{4 \delta}) |\psi|^2  + \dfrac{\delta}{2} |\psi_x|^2 + \dfrac{\sigma_1}{2} |\psi_y|^2 + \dfrac{\rho_2}{4} |\psi|^4 \\
  & \quad + \sigma_2 |\psi|^2 \phi_1 \Re (\psi) + \dfrac{1}{2} \left( \sigma_2 |\phi_1|^2 + W (\phi_2 + D \partial_x \phi_3)  \right) |\psi|^2 + \sigma_2 |\phi_1 \Re (\psi)|^2 \\
  & \quad + \dfrac{1}{2} W (\rho + D \phi_x) (|\psi|^2 + 2 \phi_1 \Re(\psi) )  \\
  & \quad + \dfrac{W}{4 M^2} \rho^2   + \dfrac{W}{4} |\nabla \phi|^2 \bigg )
 \end{align*}
 Finally we get the energy conservation
 \begin{equation*}
  \begin{aligned}
   E  & = (\lambda -\dfrac{\sigma_3^2}{4\delta}) \norma{\psi}_{L^2}^2  + \delta \norma{\psi_x}_{L^2}^2 + \sigma_1 \norma{\psi_y}_{L^2}^2 + \dfrac{\sigma_2}{2} \norma{\psi}_{L^4}^4 \\
   & \quad + 2 \sigma_2 \int |\psi|^2 \phi_1  \Re (\psi) + \sigma_2 \norma{\phi_1 \psi}_{L^2}^2  + 2 \sigma_2 \norma{ \phi_1 \Re(\psi)}_{L^2}^2\\
   & \quad +  \int W (\phi_2 + D \partial_x \phi_3) |\psi|^2 + \dfrac{W}{2 M^2} \norma{\rho}_{L^2}^2 + \dfrac{W}{2} \norma{\nabla \phi}_{L^2}^2 \\
   & \quad + \int W (\rho + D \phi_x) (|\psi|^2 + 2 \phi_1 \Re (\psi)) \\
   & = (\lambda -\dfrac{\sigma_3^2}{4\delta}) \norma{\psi}_{L^2}^2  + \delta \norma{\psi_x}_{L^2}^2 + \sigma_1 \norma{\psi_y}_{L^2}^2 \\
   & \quad + \dfrac{\sigma_2}{2} \norma{|\psi|^2 + 2 \phi_1 \Re(\psi)}_{L^2}^2 + \sigma_2 \norma{\phi_1 \psi}_{L^2}^2 + \dfrac{W}{2 M^2} \norma{\rho}_{L^2}^2 + \dfrac{W}{2} \norma{\nabla \phi}_{L^2}^2 \\
   & \quad - \int W (M^2 + D^2) |\phi_1|^2 |\psi|^2 + \int W (\rho + D \phi_x) (|\psi|^2 + 2 \phi_1 \Re (\psi)).
   \end{aligned}
 \end{equation*}

Note that we got rid of the terms involving $\phi_2$ and $\phi_3$ since by \eqref{param} one has $a+Db=-(M^2+D^2)$ implying that
$\phi_2+D\partial_x\phi_3=-(M^2+D^2)\phi_1^2.$
\end{proof}
\begin{theorem} Assume that $Q$ is the dark soliton given by \eqref{dark soliton} with  wave speed $c=0$.\\
i) Let $(\psi, \rho, \phi)$ be the solution of \eqref{perturbed 2-D Z-R system 1} obtained by Theorem \ref{local existence for perturbed system} with existence time interval $[0,T]$. Then for all $t \in [0,T]$ we have
 \begin{equation}
  \label{energy bound}
   \norma{\psi(t)}_{H^1} + \norma{\rho(t)}_{L^2}  + \norma{\phi(t)}_{H^1} \leq C(t).
 \end{equation}
ii) For any  $(\psi_0, \rho_0, \phi_0) \in H^1 \times L^2 \times H^1$, there exists a global weak solution $(\psi, \rho, \phi)$ of \eqref{perturbed 2-D Z-R system 1} such that for any $T>0$
\begin{equation}
 \label{global solution}
 \begin{aligned}
 & \psi, \phi \in L^\infty ([0,T]: H^1), \; \rho \in L^\infty([0,T]: L^2) \\
 & \psi_t, \rho_t \in L^\infty([0,T]: H^{-1}), \; \phi_t \in L^\infty ([0,T]: L^2).
 \end{aligned}
\end{equation}
\end{theorem}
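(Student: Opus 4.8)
The plan is to derive both parts from the conserved energy $E$ of Theorem~\ref{Th_energy conservation} together with the sign conditions underlying the existence of the dark soliton. For part i) the first step is to show that $E$ is coercive up to the $L^2$-norm of $\psi$. Writing $m=|\psi|^2+2\phi_1\Re(\psi)$, I would regroup the $\rho$-, $\phi_x$- and $m$-dependent part of \eqref{energy conservation} as the quadratic form $\frac{\sigma_2}{2}\norma{m}_{L^2}^2+\frac{W}{2M^2}\norma{\rho}_{L^2}^2+\frac{W}{2}\norma{\phi_x}_{L^2}^2+W\int\rho\,m+WD\int\phi_x\,m$ in $(m,\rho,\phi_x)$. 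Its leading minors are positive exactly when $\sigma_2>W(M^2+D^2)$, the binding one being the determinant $\frac{W^2}{8M^2}\left(\sigma_2-W(M^2+D^2)\right)$; this is precisely the dark-soliton condition \eqref{condition of existence of black soliton} at $c=0$, so the form dominates $c_0(\norma{m}_{L^2}^2+\norma{\rho}_{L^2}^2+\norma{\phi_x}_{L^2}^2)$ for some $c_0>0$. The gradient terms $\delta\norma{\psi_x}_{L^2}^2+\sigma_1\norma{\psi_y}_{L^2}^2$ are positive ($\delta,\sigma_1>0$) and the leftover $\frac{W}{2}\norma{\phi_y}_{L^2}^2$ and $\sigma_2\norma{\phi_1\psi}_{L^2}^2$ are nonnegative, while the only negative contributions, $(\lambda-\frac{\sigma_3^2}{4\delta})\norma{\psi}_{L^2}^2$ and $-W(M^2+D^2)\int|\phi_1|^2|\psi|^2$, are bounded below by $-C\norma{\psi}_{L^2}^2$ since $\phi_1=R$ is bounded. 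Conservation of $E$ then gives $\norma{\nabla\psi}_{L^2}^2+\norma{m}_{L^2}^2+\norma{\rho}_{L^2}^2+\norma{\nabla\phi}_{L^2}^2\le C(1+\norma{\psi}_{L^2}^2)$.

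The second step controls $\norma{\psi}_{L^2}$, which is not conserved for the perturbed equation. Multiplying the first equation of \eqref{perturbed 2-D Z-R system 1} by $\bar\psi$, integrating and taking imaginary parts, the self-adjoint, dispersive and real-potential terms integrate to real quantities and drop out, so that only the soliton coupling survives: $\frac{1}{2}\frac{d}{dt}\norma{\psi}_{L^2}^2=-\int\left(\sigma_2|\psi|^2+W(\rho+D\phi_x)+2\sigma_2\phi_1\Re(\psi)\right)\phi_1\Im(\psi)$. The only delicate term is the cubic one; rather than using Gagliardo--Nirenberg with a gradient (which would produce a $\norma{\psi}_{L^2}^3$ and permit finite-time blow-up), I would write $\int|\psi|^3\le\norma{\psi}_{L^2}\norma{\psi}_{L^4}^2$ and bound $\norma{\psi}_{L^4}^2=\norma{|\psi|^2}_{L^2}\le\norma{m}_{L^2}+2\norma{\phi_1}_{L^\infty}\norma{\psi}_{L^2}\le C(1+\norma{\psi}_{L^2})$ by the first step. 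Together with the boundedness of $\phi_1$ this yields $\frac{d}{dt}\norma{\psi}_{L^2}^2\le C(1+\norma{\psi}_{L^2}^2)$, and Gronwall produces an exponential-in-time bound on $\norma{\psi}_{L^2}$. Substituting back then bounds $\norma{\psi}_{H^1}$, $\norma{\rho}_{L^2}$ and $\norma{\nabla\phi}_{L^2}$, and $\norma{\phi}_{L^2}$ follows by integrating the third equation of \eqref{perturbed 2-D Z-R system 1} in time; this establishes \eqref{energy bound}.

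For part ii) the plan is a compactness argument built on this uniform bound. I would regularize the data, approximating $(\psi_0,\rho_0,\phi_0)\in H^1\times L^2\times H^1$ by smooth data in $H^{s+1}\times H^s\times H^{s+1}$ (the constraint \eqref{V_0 2} being met by the explicit choice $V_0=-WM\nabla\phi_0-\frac{WD}{M}(\rho_0,0)^T$), and the equation itself, so as to obtain globally defined approximate solutions on which the energy identity, hence the bound of part i), holds uniformly. This gives $\psi^n,\phi^n$ bounded in $L^\infty(0,T;H^1)$ and $\rho^n$ in $L^\infty(0,T;L^2)$; reading off the three equations then bounds $\psi^n_t,\rho^n_t$ in $L^\infty(0,T;H^{-1})$ and $\phi^n_t$ in $L^\infty(0,T;L^2)$, which is exactly the regularity asserted in \eqref{global solution}. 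Extracting weak-$*$ limits and upgrading to strong $L^2_{\mathrm{loc}}$ convergence by the Aubin--Lions--Simon lemma then lets me pass to the limit in the quadratic and cubic nonlinearities and identify a weak solution. I expect this last passage to the limit to be the main obstacle: because the soliton background $\phi_1$ does not decay in the transverse variable and the domain is unbounded, the compactness must be obtained locally in space, the mixed terms coupling $\psi^n$ with $\phi_1$ being handled using only $\norma{\phi_1}_{L^\infty}<\infty$, and the regularized energy estimate must be kept uniform throughout the scheme.
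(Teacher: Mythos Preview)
Your argument is correct and follows the same overall strategy as the paper: coercivity of $E$ via the dark-soliton sign condition $\sigma_2>W(M^2+D^2)$, a Gronwall bound on $\norma{\psi}_{L^2}$ obtained from the imaginary part of the first equation, and then a regularization/compactness scheme with Aubin--Lions for part ii).

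The only organizational difference is in the $L^2$ step. You isolate the cubic piece $\int|\psi|^2\phi_1\Im\psi$ and control it through $\norma{\psi}_{L^4}^2\le\norma{m}_{L^2}+C\norma{\psi}_{L^2}$, whereas the paper never separates $|\psi|^2$ from $2\phi_1\Re\psi$: it writes the whole right-hand side as $\Im\int(\sigma_2 m+W\rho+WD\phi_x)\,\phi_1\bar\psi$ and applies Young's inequality with the \emph{same} constants $c_1,c_2,c_3$ produced by the coercivity step, so that the combination $c_1\norma{m}_{L^2}^2+c_2\norma{\rho}_{L^2}^2+c_3\norma{\phi_x}_{L^2}^2$ appears verbatim and is replaced by $E+C\norma{\psi}_{L^2}^2$ in one stroke. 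Your detour through $\norma{\psi}_{L^4}$ is perfectly valid but slightly longer; the paper's packaging avoids it. Likewise, your Sylvester-criterion argument for positive definiteness of the $(m,\rho,\phi_x)$ form and the paper's Young-with-$\varepsilon$ computation are equivalent ways of extracting the same inequality.
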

\begin{proof}
i) For any $ \varepsilon \in (0,1)$, using Cauchy inequality we have
\[
 \left | \int W \rho (|\psi|^2 + 2 \phi_1 \Re (\psi)) \right | \leq \dfrac{W}{2 M^2} (1- \varepsilon) \int \rho^2 + \dfrac{M^2 W}{2 (1- \varepsilon)} \int (|\psi|^2 + 2 \phi_1 \Re (\psi))^2 
\]
and
\[
 \left | \int W D \phi_x (|\psi|^2 + 2 \phi_1 \Re (\psi)) \right | \leq \dfrac{W}{2} (1- \varepsilon) \int |\phi_x|^2 + \dfrac{W D^2}{2(1-\varepsilon)} \int (|\psi|^2 + 2 \phi_1 \Re (\psi))^2,
\]
then 
\begin{align*}
&\left | \int W (\rho + D \phi_x) (|\psi|^2 + 2 \phi_1 \Re (\psi)) \right | \\
& \quad \leq \dfrac{W (M^2 + D^2)}{2 (1- \varepsilon)} \norma{|\psi|^2 + 2 \phi_1 \Re(\psi)}_{L^2}^2 + \dfrac{W}{2 M^2} (1- \varepsilon) \norma{\rho}_{L^2}^2 \\
& \qquad + \dfrac{W}{2} (1- \varepsilon) \norma{\nabla\phi}_{L^2}^2.
\end{align*}
Note that we are considering the stationary dark soliton, so that  the condition \eqref{condition of existence of black soliton} becomes
\begin{equation*}
  \left \{
   \begin{aligned}
    & \dfrac{1}{\delta} \left( \dfrac{\sigma_3^2}{4 \delta} - \lambda   \right) > 0\\
   &  \dfrac{1}{\delta} \left(  \sigma_2 - W (M^2 + D^2) \right) > 0,
   \end{aligned}
  \right.
\end{equation*} 

so there exists $\varepsilon >0$ small enough such that 
\[
 \sigma_2 > \dfrac{W (M^2 + D^2)}{(1- \varepsilon)}.
\]
Therefore, the conservation law \eqref{energy conservation} implies
\begin{equation}
\label{L2 bound 1}
\begin{aligned}
& \dfrac{1}{2} (\sigma_2 - \dfrac{W (M^2 + D^2)}{(1- \varepsilon)} ) \norma{|\psi|^2 + 2 \phi_1 \Re(\psi)}_{L^2}^2 + \varepsilon \dfrac{W}{2 M^2} \norma{\rho}_{L^2}^2 + \varepsilon \dfrac{W}{2} \norma{\nabla \phi}_{L^2}^2 \\
& \quad \leq E + (\dfrac{\sigma_3^2}{4\delta} - \lambda) \norma{\psi}_{L^2}^2.
\end{aligned}
\end{equation}
From now we will fix such an $\varepsilon$ and define
\[
 c_1= \dfrac{1}{2} (\sigma_2 - \dfrac{W (M + D^2)}{(1- \varepsilon)} ), \, c_2 = \varepsilon \dfrac{W}{2 M} , \, c_3 = \varepsilon \dfrac{W}{2}.
\]
The first equation of \eqref{perturbed 2-D Z-R system 1}  implies
\begin{align*}
\dfrac{1}{2} \dfrac{d}{dt} \norma{\psi}_{L^2}^2 & = \Im \int ( \sigma_2 ( |\psi|^2 + 2 \phi_1 \Re(\psi) ) + W(\rho + D \phi_x) ) \phi_1 \bar{\psi} \\
& \leq c_1 \norma{|\psi|^2 + 2 \phi_1 \Re(\psi)}_{L^2}^2 + c_2 \norma{\rho}_{L^2}^2 + c_3 \norma{\phi_x}_{L^2}^2 \\
& \qquad + ( \dfrac{\sigma_2^2}{4 c_1} + \dfrac{W^2}{4 c_2} + \dfrac{W^2 D^2}{4 c_3} ) \norma{\phi_1 \Im(\psi)}_{L^2}^2.
\end{align*}
Combining with \eqref{L2 bound 1} we get
\begin{equation*}
 \begin{aligned}
  \dfrac{1}{2} \dfrac{d}{dt} \norma{\psi}_{L^2}^2 & \leq E + (\dfrac{\sigma_3^2}{4\delta} - \lambda) \norma{\psi}_{L^2}^2  \\
  & \quad +  ( \dfrac{\sigma_2^2}{4 c_1} + \dfrac{W^2}{4 c_2} + \dfrac{W^2 D^2}{4 c_3} ) \norma{\phi_1 \Im(\psi)}_{L^2}^2.
 \end{aligned}
\end{equation*}
Since $\phi_1 \in L^\infty,$  $\norma{\phi_1 \Im (\psi)}_{L^2}$ is under control and we recall that $\dfrac{\sigma_3^2}{4 \delta} - \lambda >0$. Hence, by Gronwall inequality we can bound  $\norma{\psi(t)}_{L^2}$ by a constant depending on $t$. The bounds on $\norma{\nabla \psi}_{L^2}, \, \norma{\rho}_{L^2}, \, \norma{\nabla \phi}_{L^2}$ then follow from the energy conservation.\\

\vspace{0.3cm}	

ii)  We shall use a classical compactness method (see for instance \cite{Li}). The estimates in (i) prove that regular solutions of \eqref{perturbed 2-D Z-R system 1} are uniformly bounded in $H^1\times L^2\times H^1$. To obtain global weak solutions, as in \cite{SuSu2}, we  first implement a   Galerkin approximation process (possibly after smoothing the initial data) , yielding  a sequence of approximate solutions $(\psi_m,\rho_m,\phi_m)$ of \eqref{perturbed 2-D Z-R system 1} 
Using  part i), we have that for any $T>0$, $(\psi_\eta, \rho_\eta, \phi_\eta)$ is bounded, independently of $\eta$, in the space
\begin{equation}
 \label{space of eta}
  L^\infty((0,T): H^1) \times L^\infty((0,T): L^2) \times L^\infty ((0,T): H^1).
\end{equation}
Therefore, using \eqref{perturbed 2-D Z-R system 1} and Sobolev theorem we infer that $(\partial_t \psi_m, \partial_t \rho_m, \partial_t \phi_m)$ is bounded in 
\[
 L^\infty((0,T): H^{-1}) \times L^\infty((0,T): H^{-1}) \times L^\infty ((0,T): L^2).
\]
Hence, up to a subsequence one can assume that 
\begin{equation}
 \label{weak limit eta}
  \begin{aligned}
   & \psi_m \rightarrow \psi \; \text{in} \; L^\infty((0,T): H^1) - \text{weak*}, \\
   & \rho_m \rightarrow \rho \; \text{in} \; L^\infty ((0,T): L^2) - \text{weak*}, \\
   & \phi_m \rightarrow \phi \; \text{in} \; L^\infty ((0,T): H^1) - \text{weak*}.
  \end{aligned}
\end{equation}
By  Aubin-Lions lemma one can furthermore assume that up to a subsequence 
\begin{equation}
 \label{strong limit eta}
 \psi_m \rightarrow \psi \; \text{in} \; L^p_{loc} ([0,T]: L^q_{loc}(\R^2)),
\end{equation}
for any $2 \leq p, q < \infty$. Similar convergence results hold true for $\phi_m, \rho_m$. 

These convergences allow to pass to the limit in the distribution sense in \eqref{perturbed 2-D Z-R system 1} for $(\psi_m, \rho_m, \phi_m)$, proving that $(\psi, \rho, \phi)$ satisfies \eqref{perturbed 2-D Z-R system 1} in $L^\infty((0,T): H^{-1}) \times L^\infty((0,T): H^{-1}) \times L^\infty ((0,T): L^2)$.\\
The initial condition makes sense since
\[
 (\psi, \rho, \phi) \in C_w([0,T] : H^1) \times C_w([0,T]: L^2) \times C_w([0,T] : H^1)
\]
\end{proof}

\section{Solitary wave solutions in higher dimension}
\vspace{0.5cm}
Let consider now solitary waves solutions of \eqref{2-D Z-R system 1} that is solutions of the form $(e^{i\omega t}\psi(x+\sigma_3t,y),\phi(x,y),\eta(x,y)),\; \omega \in \R,\; \psi\in H^1(\R^2),$ yielding the system

\begin{equation}
 \label{SW-ZR}
  \left \{
   \begin{aligned}
   & -\omega \psi+\delta \psi_{xx}+\sigma_1\nabla_\perp\psi-(\sigma_2- W M^2)|\psi|^2\psi-c W\phi_x \psi=0 \\
   & \Delta\phi+c|\psi|^2_x = 0,
   \end{aligned}
  \right.
\end{equation}

which is similar to the equation for the solitary wave solutions of the elliptic/hyperbolic-elliptic Davey -Stewartson systems in the terminology of \cite{GS}.
By Pohojaev type arguments one obtains (see \cite{GS1} for similar arguments) that  non trivial solutions to \eqref{SW-ZR} cannot exist when $\delta\sigma_1<0.$

On the other hand the existence of non trivial solutions to \eqref{SW-ZR} has been established (\cite{Ci}) in the {\it focusing case}

$$\delta\sigma_1>0,\quad c<0,\quad c(W(\sigma_2-WM^2)<0.$$

\vspace{1cm}
Various stability and instability results of solutions to \eqref{SW-ZR} have been obtained in \cite{Ci2,Ot1,Ot2,Ot3} in the context of the  Davey-Stewartson systems but no similar results seemed to be known when they are viewed as solutions to the Zakharov-Rubenchik systems. In particular one does not know if the solutions of \eqref{SW-ZR} are constrainded minimizers of the Zakharov-Rubenchik system. 

According to the Davey-Stewartson case, one could conjecture that those localized solitary waves are unstable.

\vspace{1cm}

\section{Conclusion and open questions}
We have addressed in this paper some issues on the Zakharov-Rubenchik, Benney-Roskes systems. Many questions remain unsolved for those important systems and we indicate a few below.

1. Justify rigorously the limit of ZR (BR) systems to the  Davey-Stewartson systems. This is non trivial (because of a boundary layer at $t=0$) issue is analogous (but more delicate) to the Schr\"{o}dinger limit of the Zakharov system (see \cite{OT2, OT3}).

2. The present work can be viewed as a preliminary step towards the study of the transverse stability/instability  of the ZR or BR one dimensional dark or bright solitary wave. Perturbations could be localized in (x,y) or periodic in y. The Cauchy problem was addressed in the present paper in both cases.


In both the functional settings, we plan to come back to those transverse stability  issues  a subsequent work, in the spirit of \cite{RT,RT1,RT2, Mi, MT}. 

3. It is known (\cite{GM, GM1}) that (radially symmetric) solutions of the Zakharov system may blow up in finite time. Such a result is unknown for the Zakharov-Rubenchik, Benney-Roskes system and it would interesting to see if the results in \cite{GM, GM1} extend to \eqref{ZR-system}.

4. The existence result Theorem \ref{local existence for unperturbed system} is established when $\delta \sigma_1>0,$ that is when the 
Schr\"{o}dinger equation in \eqref{ZR-system} is not an "non elliptic" one in the terminology of \cite{GS}. This condition is never  satisfied in the context of purely gravity waves water waves (see \cite{La1} and the discussion above) and it would interesting to relax it. 

5. We recall that an existence result on time scales of order $1/\epsilon$ is needed to fullly justify the Benney-Roskes system. Obtaining such a result is still a challenging open problem.

\vspace{0.3cm}

\section*{Acknowledgements}
We acknowledge financial support by the Austrian Science Foundation FWF under grant No F41 (SFB "VICOM"), grant No F65 (SFB Complexity in PDEs) and grant No W1245 (DK "Nonlinear PDEs"), and by the French Science Foundation ANR, under grant GEODISP).
We acknowledge an anonymous referee for valuable comments and criticisms that helped us to improve the manuscript.

\vspace{0.3cm}

\bibliographystyle{amsplain}

\end{document}